\documentclass[a4,11pt]{article}

\usepackage{amsthm}
\usepackage{amsmath}
\usepackage{bm}
\usepackage{amssymb}
\usepackage{amsfonts}
\usepackage{comment}
\usepackage{cases}
\usepackage[normalem]{ulem}
\usepackage{algorithm}
\usepackage{algpseudocode}
\usepackage{graphicx}
\usepackage{xcolor}

\usepackage{hyperref}
\usepackage{cleveref}

\bibliographystyle{plain}

\numberwithin{equation}{section}

\usepackage{xcolor}

\newcommand{\Div}{\mbox{div}\mspace{3mu}}

\newcommand{\ud}{\,\mbox{d}}

\newcommand{\R}{\mathbb{R}}

\newcommand{\CR}{\mbox{\tiny{CR}}}



\newtheorem{theorem}{Theorem}[section]

\newtheorem{remark}[theorem]{Remark}
\newtheorem{lemma}{Theorem}[section]

\crefname{theorem}{Theorem}{Theorems}
\crefname{assertion}{Assertion}{Assertions}
\crefname{remark}{Remark}{Remarks}

\newcommand{\email}[1]{\texttt{#1}}

\newcommand{\TheTitle}{%
A Two-Stage Finite Element Approach for High-precision Guaranteed Lower Eigenvalue Bounds
}

\title{{\TheTitle}\thanks{%
This work was supported in part by Grant Numbers JP20KK0306, JP24K00538, JP21H00998 and the Japan–China bilateral joint research project (JPJSBP120237407).%
}}

\author{%
Xuefeng Liu\thanks{School of Arts and Sciences, Tokyo Woman's Christian University,
2 Chome-6-1 Zenpukuji, Suginami-ku, Tokyo 167-8585, Japan
(\email{xfliu@lab.twcu.ac.jp}).}
\and
Michael Plum\thanks{Karlsruhe Institute of Technology, Institute for Analysis, Englerstra{\ss}e 2, 76131 Karlsruhe, Germany
(\email{michael.plum@kit.edu}).}
}

\pagestyle{plain}




\pagestyle{plain}
\begin{document}

\date{}
\maketitle
\begin{abstract}

Obtaining high-precision guaranteed lower eigenvalue bounds remains difficult, even though the standard high-order conforming finite element (FEM) easily yields extremely sharp upper bounds. Recently developed rigorous  approaches using such as Crouzeix–Raviart or linear conforming elements do not extend well to high-order FEM. Some non-standard FEM approaches can provide sharp eigenvalue bounds but are technically involved. This persistent gap between accurate upper bounds and equally sharp rigorous lower bounds via standard high-order conforming FEMs makes the problem technically demanding and highly competitive. In this paper, we propose a new two-stage rigorous algorithm that closes this gap by employing high-order FEM on graded meshes and producing {\em rigorous lower eigenvalue bounds as sharp as the corresponding high-order upper bounds}, as demonstrated in our numerical examples. Numerical experiments for the Laplacian and Steklov eigenvalue problems on square and dumbbell domains show the accuracy and efficiency of the method, particularly on graded or highly nonuniform meshes. These results confirm that the proposed approach provides a practical and competitive solution to the long-standing difficulty of obtaining sharp, reliable lower eigenvalue bounds.
\vskip0.3cm {\bf Keywords.}  Eigenvalue bound,
Crouzeix--Raviart finite element method,  Guaranteed computation, Laplacian eigenvalue problem, Steklov eigenvalue problem, Lehmann--Goerisch's theorem
\vskip0.2cm {\bf AMS subject classifications.} 65N30, 65N25.
\end{abstract}

\section{Introduction}

\medskip

Obtaining high-precision guaranteed lower eigenvalue bounds remains difficult, even though standard high-order conforming finite element methods (FEM) easily yield extremely sharp upper bounds.
To take advantage of high-order FEMs, Carstensen et al. have developed several non-standard FEM approaches to obtain high-precision lower bounds \cite{carstensen2020skeletal,carstensen2021guaranteed,carstensen2023direct}. These methods are effective but technically involved and not easy to apply.

In this paper, we instead apply the classical Lehmann--Goerisch theorem in combination with standard high-order FEM to obtain lower eigenvalue bounds of quality comparable to the well-known Rayleigh--Ritz upper bounds. Generally, the Lehmann--Goerisch theorem by itself is not straightforward to use because it requires substantial spectral pre-information, namely a rough lower bound for some higher eigenvalue. 
To address this difficulty, a homotopy method was proposed by the second author (see \cite{plum1990eigenvalue,plum1991bounds} for the initial ideas, and further developments in \cite[Chapter 10]{nakao2019numerical_book}), and applied to many examples, but this approach may involve substantial additional processes and numerical effort. 
In contrast, we introduce a much simpler approach: the projection-based lower eigenvalue bounds obtained via, e.g., the Crouzeix--Raviart FEM are used to provide the necessary spectral pre-information for the Lehmann–Goerisch theorem, thereby avoiding the homotopy method; consequently, the use of standard high-order FEM within the Lehmann--Goerisch framework produces high-precision, fully guaranteed lower eigenvalue bounds.

\medskip

Eigenvalue problems are among the most fundamental subjects in numerical analysis, with a long history of methods for upper and lower bound estimation; see \cite{Babuska-Osborn-1991} for foundational theory and \cite{Boffi2010} for a survey of FEM-based eigenvalue techniques.
Below, we review the literature on rigorously validated eigenvalue bounds that do not rely on additional assumptions and place our method in the broader context.

The homotopy method developed by the second author \cite{plum1991bounds} sets up a family of 
eigenvalue problems that connects the target eigenvalue problem to a base problem with known  spectral bounds, 
and then rigorously estimates the variation of the eigenvalues in this homotopy process. 
This rather general method is applicable to self-adjoint eigenvalue problems also with essential spectrum, and hence, particularly to partial differential equation eigenvalue problems also on unbounded domains. 
To apply the homotopy method, one needs to construct a base problem, which is often available through theoretical techniques.  
The homotopy method takes advantage of the Lehmann--Goerisch theorem  along with rough eigenvalue bounds obtained from the respective previous homotopy step to produce sharp eigenvalue bounds (see early papers \cite{Lehmann1963,behnke1994inclusions,GoerischHe1990,Albrecht1987,plum1990eigenvalue,plum1991bounds,plum1997guaranteed} and a systematic introduction in \cite[Chapter 10]{nakao2019numerical_book}).

The projection-based method is another useful approach to provide explicit eigenvalue bounds when the projection error can be explicitly estimated \cite{Birkhoff_etal1966, Kikuchi+Liu2007,liu-kikuchi-2010, Liu-Oishi-2013,Liu-2015,Kobayashi2015,carstensenGallistl2014, Carstensen2014}. In \cite{Birkhoff_etal1966}, Birkhoff et al. obtained eigenvalue bounds for smooth Sturm--Liouville systems by using projection to piecewise-cubic polynomials. In  \cite{LiuOis2011} and \cite{Liu-Oishi-2013},  the idea of Birkhoff was extended to propose guaranteed two-sided bounds for the Laplacian eigenvalue problem by using the Galerkin projection associated to conforming finite element methods. The proposed eigenvalue bounds can naturally handle problems defined over bounded polygonal domains of arbitrary shape. The projection-error based method is further considered for general self-adjoint differential operators with compact resolvent \cite{Liu-2015} and  eigenvalue problems formulated by positive semidefinite bilinear forms \cite{You-Xie-Liu-2019,liu2020explicit}, where essential spectrum may appear. Meanwhile, the nonconforming finite element method is utilized to provide easy-to-obtain estimation of the projection error. Under efforts from different research groups, the projection-based method proposed has been successfully applied to various eigenvalue problems, including the Laplacian \cite{Liu-2015,Carstensen2014},  the Biharmonic operator \cite{carstensenGallistl2014,liu2018you,liao2019optimal}, the Stokes operator \cite{xie2018explicit,liu2021explicit}, the Steklov operator \cite{You-Xie-Liu-2019,NakanoLiYueLiu+2023} and the Maxwell operator \cite{gallistl2021computational}. 
It is worth pointing out that, since the projection-based method cannot take advantage of high-order finite element methods or graded meshes, it is costly to apply this kind of method with mesh refinement for the purpose of high-precision eigenvalue bounds. Such a defect urges us to turn to the Lehmann--Goerisch theorem for sharper eigenvalue bounds.
Note that the above methods, together with the Rayleigh--Ritz method providing upper eigenvalue bounds, give two-sided eigenvalue bounds with the information about the 
indices of the enclosed eigenvalues.
Hence, eigenvalue-excluding ranges are automatically obtained.

 Another interesting approach is to verify existence and non-existence ranges of eigenvalues in the case that no eigenvalue index information is available,  for example,  for non-self-adjoint problems. 
  Nakao et al. \cite{nakao1999numerical}, Lahmann et al. \cite{lahmann2004computer} and Watanabe et al. \cite{watanabe2009computer, watanabe2011computer,watanabe2014verified} developed methods that provide eigenvalue bounds by identifying ranges where eigenvalues exist and cannot exist. The eigenvalue problem is transformed into an investigation of solution existence and uniqueness for certain nonlinear  differential equations, for which  techniques as described in  \cite{NAKAO1992,plum1993numerical,nakao2019numerical_book} can be used.
  In Nagatou et al. \cite{nagatou2012eigenvalue},  a Birman--Schwinger approach is used for identifying eigenvalue exclusion ranges. 
 See also the general description in \cite[Section 9.3]{nakao2019numerical_book}.

In the field of FEMs, determining two-sided eigenvalue bounds for differential operators is also a significant and difficult task.
For special domains with well-constructed meshes, it has been proved that approximate eigenvalues obtained by the mass-lumping method provide lower eigenvalue bounds directly  \cite{Hu-Huang-Shen-2004}. Many nonconforming finite element methods (FEMs) also yield asymptotic lower bounds for eigenvalues when the eigenfunctions exhibit singularities. That is, as the mesh size tends to zero, the computed eigenvalues converge to the exact values from below. See the early work \cite{armentano2004asymptotic} and the surveys in \cite{Yang2010, Luo-Lin-Xie-2012}. 
However, it is still not clear how to verify  the required precondition, i.e., the mesh size being small enough, to ensure the asymptotic lower bounds.
Recently, Carstensen et al. developed non-standard FEM approaches to provide direct eigenvalue bounds {that can take  advantage of non-uniform (graded) meshes }\cite{carstensen2020skeletal,carstensen2021guaranteed, carstensen2023direct}. The utilization of such approaches leads to a significant increase in degrees of freedom and complexity of the finite element spaces as compared to standard finite element methods.

\medskip

\paragraph{Contribution of this paper} 	
This paper proposes a new two-stage algorithm to provide high-precision guaranteed eigenvalue bounds for differential operators. Different from the projection-based  eigenvalue bounds, the utilization of Lehmann--Goerisch's theorem in the proposed algorithm takes advantage of high-order FEMs and graded meshes.

In the first stage of the proposed algorithm, in order to deal with the eigenvalue problem formulated by positive semidefinite bilinear forms, e.g., the Steklov eigenvalue problem, a new concise proof is provided for the lower eigenvalue bounds proposed in \cite{You-Xie-Liu-2019} (see also \cite{Kikuchi+Liu2007,Kobayashi2015,Carstensen2014,Liu-Oishi-2013}):
\begin{equation}
\label{main-result-summary}
\lambda_k \ge  \frac{\lambda_{k,h}}{1+C_h^2 \lambda_{k,h}}\:.
\end{equation}
Here, $\lambda_{k,h}$ is the exact $k$th eigenvalue of {the} discretized matrix eigenvalue problem obtained from the finite element method (see \S\ref{section-steklov} for details), and 
$C_h$ is a quantity that can be evaluated {explicitly}.
{Here, the nonconforming FEM is helpful, since the quantity $C_h$ can then easily be determined by the error constant for the local interpolation operator associated to the FEM space.}
The new proof simplifies the one of \cite{You-Xie-Liu-2019}, which considers a complicated space decomposition with respect to the kernel space of the operator. Another feature of the proof is that the assumption of positive definiteness of $a(\cdot, \cdot)$ on $V(h)$ (see definition in \S\ref{subsec:lower-bound}) is not needed any more; see Remark \ref{remark:direct-lower-bound}.

\medskip

In the second stage, based on the rough eigenvalue bound \cref{main-result-summary} obtained from the first stage with  low computational intensity, high-precision eigenvalue bounds are provided by utilizing the Lehmann--Goerisch theorem along with high-order conforming FEMs. The Lehmann--Goerisch theorem is a general theoretical framework that provides sharp and optimal eigenvalue bounds, given rough {\it a priori} eigenvalue bounds and high-quality approximate eigenfunctions. Sharp eigenvalue bounds derived from the Lehmann–Goerisch theorem in combination with FEM have been studied in previous works \cite{liu-2013-2,liu2014high,tomas2018}. The application of   the Lehmann--Goerisch theorem to the Laplacian and the Steklov eigenvalue problems are discussed in Sections 4 and 5.

\medskip

We summarize the advantages of the proposed two-stage algorithm for the purpose of high-precision eigenvalue bounds.
\begin{itemize}
    \item [(a)] The utilization of high-order FEM spaces leads to high-precision eigenvalue bounds even for coarse meshes. 
    For example, for the unit square domain, by using the second-order Lagrange FEM over an $8\times 8$ mesh, we {\em prove} by our two-stage algorithm that the first Steklov eigenvalue is bounded as follows:
$$
\underline{0.2400790}83 \le \lambda_1 \le \underline{0.2400790}91  \:.
$$
In our numerical examples in \S \ref{subsec:numerical-results-laplacian}, we consider a problem with nearby eigenvalues, where the high-precision bounds based on the Lehmann--Goerisch theorem are accurate enough to separate the enclosing intervals and thus prove strict inequality between these eigenvalues, while the rough projection based eigenvalue bounds computed with the same domain triangulation are far too inaccurate for such a separation.
	\item [(b)] The Lehmann--Goerisch theorem can take advantage of graded meshes, which is not possible in the projection-based lower eigenvalue bounds in \eqref{main-result-summary}. 
	\item [(c)] According to numerical evidence, the utilization of Lehmann--Goerisch's theorem  provides lower eigenvalue bounds of the optimal convergence rate for the eigenvalues of interest, compared with the possible sub-optimal bounds from the projection-error based method. For example, the explicit lower eigenvalue bound \cref{main-result-summary} when applied to the Steklov eigenvalue problem has a convergence rate  $O(h)$ since $C_h=O(\sqrt{h})$ ($h$: the mesh size) even for convex domains, which is suboptimal compared with the convergence rate of $\lambda_{k,h}$ itself; see details in Remark \ref{remark:sub-optimal-rate-steklov}. 
	 On the contrary, the eigenvalue bound using the linear Lagrange FEM in the second stage of the proposed algorithm has the convergence rate  $O(h^2)$. 

\end{itemize}

\medskip

The rest of the paper is organized as follows. In \S 2, we introduce the abstractly formulated eigenvalue problems and their basic properties. In \S 3, as the first stage of the proposed algorithm, the projection-based  eigenvalue bounds are introduced along with a new and concise proof. There are details on the application of this eigenvalue bounds to the Laplacian and the Steklov eigenvalue problems. In \S 4, as the second stage, high-precision eigenvalue bounds based on the Lehmann--Goerisch theorem are discussed. In \S 5, computational results are presented to demonstrate the efficiency of our proposed algorithm for bounding eigenvalues. Finally, in \S 6, we summarize the results of this paper.

\section{Variational eigenvalue problem}
First, let us formulate the assumptions for the target eigenvalue problem to be discussed.

Let $V$ and $W$ be separable Hilbert spaces endowed with the norms $\|\cdot\|_a$ and $\|\cdot\|_b$ induced by the  inner products $a(\cdot, \cdot)$ and $b(\cdot, \cdot)$, respectively.
Let $\gamma: V\to W$ be a compact linear operator and consider the following eigenvalue problem.

\medskip

Find $u\in V\setminus\{0\}$ and $\lambda \in \mathbb{R}$ such that
\begin{equation}
	\label{eq:objective-evp}
	a(u,v)=\lambda b(\gamma u, \gamma v) \quad \forall v \in V.
\end{equation}

To study this eigenvalue problem, let us introduce the solution operator  $G:W\to V$ such that for any $f\in W$, $ G f \in V$ uniquely solves the following equation:
$$
a(G f, v) = b(f,\gamma v)\quad \forall v \in V.
$$
It is easy to see that {$ G $ is bounded and hence} $\mathcal{K} := G  \gamma : V\to V$ is a compact self-adjoint {positive semidefinite operator}. The eigenvalue problem for the operator $\mathcal{K} $ is to find eigenfunctions $u_k \in V\setminus\{0\}$ and corresponding eigenvalues $\mu_k \ge 0 $ such that 
\begin{equation}
  \label{eq-main-eig-form}
  b(\gamma u_k, \gamma v) = \mu_k \, a(u_k, v)  \quad \forall v \in V.
\end{equation}

\begin{remark}\label{re:evp-examples}
We now illustrate the above framework with two concrete eigenvalue problems.
For the Laplacian eigenvalue problem on a bounded domain 
$\Omega \subset \mathbb{R}^k$, we take the following setting.
\begin{equation}
\label{eq:setting-laplacian-EVP}
  \left\{
    \begin{array}{l} 
      \displaystyle V=H^1_0(\Omega),  \quad W=L^2(\Omega)~; \\
      \displaystyle a(u,v)=\int_{\Omega} \nabla u \cdot \nabla v  \,\mbox{d}x, \quad b(u,v)=\int_{\Omega} u v  \,\mbox{d}x;\\
      \gamma : \mbox{embedding operator}.  
    \end{array}
  \right.
\end{equation}
For the Steklov eigenvalue problem on a bounded Lipschitz domain $\Omega \subset \mathbb{R}^k$, we use the following setting.
\begin{equation}
\label{eq:setting-steklov-EVP}
  \left\{
    \begin{array}{l} 
      \displaystyle V=H^1(\Omega),  \quad W=L^2(\partial \Omega)~; \\
      \displaystyle a(u,v)=\int_{\Omega} \nabla u \cdot \nabla v + uv \,\mbox{d}x, \quad b(u,v)=\int_{\partial \Omega} u v  \,\mbox{d}s;\\
      \gamma : \mbox{trace operator}.  
    \end{array}
  \right.
\end{equation}
\end{remark}

\medskip

\paragraph{\bf Spectrum of $\mathcal{K} $}
Let us summarize standard results based on the spectral theory of compact symmetric operators.
In the case where $\operatorname{Ker}(\mathcal{K} )=\{0\}$,
the eigenvalues of $\mathcal{K} $ are given by a finite (when $\mbox{dim}(V)<\infty$) or countable (when $\mbox{dim}(V)=\infty$) sequence of nonzero $\mu_k$'s such that 
$$
\mu_k \ge \mu_{k+1}(>0) \mbox{ for }k\ge 1; \quad  \lim_{k\to \infty}  \mu_k =0 \mbox{ if } \mbox{dim}(V)=\infty ~.
$$
Note that these eigenvalues are repeated according to their multiplicities. 
In the case where  $\mbox{dim}(\operatorname{Ker}(\mathcal{K} ))>0$,  
$b(\gamma ~ \cdot ,\gamma ~ \cdot )$ is only positive semidefinite on $V$, and in addition to the nonzero $\mu_k$'s, $0$  is also an eigenvalue of $\mathcal{K} $, possibly of infinite multiplicity.

\paragraph{\bf Space decomposition of $V$}
Note that 
$\mbox{Ker}( \mathcal{K}  ) (\subset V)$ can be characterized by \footnote{Note that for $u$ such that $b(\gamma u, \gamma u)=0$, we have $\gamma u=0$, since $b(\cdot,\cdot)$ is the inner product on  $W$. Hence,  $\mathcal{K} u =G \gamma u=0$. On the other side, for $u$ such that $\mathcal{K} u=0$, we have
$b(\gamma u, \gamma  v)= a(\mathcal{K} u, v)=0$ $ \forall v\in V$, and hence 
$b(\gamma u, \gamma  u)=0$. }
$$
\mbox{Ker}( \mathcal{K}  ) = \{u \in V \:|\: b(\gamma u, \gamma u) =0 \} = \{u \in V \:|\: \gamma u =0 \}\:.
$$
Then $V$ admits an $a$-orthogonal decomposition:
$V = \mbox{Ker}(\mathcal{K} ) \oplus  \mbox{Ker} (\mathcal{K} )^\perp
$. 

\medskip

We  introduce the Rayleigh quotient $R(\cdot)$ on  $V$: for $v\in V$, $v\neq 0$,
\begin{equation}
\label{eq:rayleigh_quotient}
R(v):=\frac{b(\gamma v,\gamma v)}{a(v,v)}.
\end{equation}
The eigenvalues $\mu_k$ are the stationary values of $R$ on $V$.
The following  principles play an important role in studying eigenvalue problems (see, e.g., \cite{kato1966, weinberger1974variational,nakao1999numerical}).
  \begin{equation}
   \label{eq:min-max-eig-subspace}
    \mu_{k}=\max_{\substack{H_{k}\subset V}}~\min_{\substack{v\in H_{k}\setminus \{0\}}}R(v)=
	\min_{\substack{S_{k-1}\subset V}} ~\max_{\substack{v\perp S_{k-1},v\neq 0}} R(v)~.
  \end{equation}
Here, $H_k$ denotes a $k$-dimensional subspace of $V$, and
 $S_{k}$ denotes a subspace of $V$  with $\mbox{dim}(S_{k}) \le k$. 

Let $\{u_1,u_2, \ldots\}$ be an $a$-orthonormal system of eigenfunctions associated to the $\mu_{k}$'s. 
Define $E_{k}:=\mbox{span}\{u_1, \ldots, u_k\}$.
Then the eigenvalue $\mu_k$ can be characterized as follows:
\begin{equation}
	\mu_{k}=\min_{\substack{v\in E_{k}\setminus \{0\}}}R(v) = \max_{\substack{v\in V\setminus \{0\}, ~v\perp E_{k-1}}} R(v), 
  \label{eq:min-max-eigen-subspace}
\end{equation}

\section{{First stage:} projection-based  explicit eigenvalue bounds}

As the first stage of our proposed algorithm, we present the projection-based method that produces easy-to-implement lower bounds for the target eigenvalues. Such a method has been well studied in a series of papers by the first author \cite{Liu-Oishi-2013,Liu-2015,liu2020explicit}. See also  \cite{Birkhoff_etal1966,Kikuchi+Liu2007,Kobayashi2015,Carstensen2014}.

 Let us recall the target eigenvalue problem:  Find $u \in V$ and $\mu \ge 0$ such that
\begin{equation}
\label{eq:new-formulation-eig}
b(\gamma u, \gamma v) = \mu a(u,v)\quad \forall v \in V\:.
\end{equation}
Eigenvalue problems for linear partial differential operators are usually formulated for the reciprocals of the nonzero eigenvalues $\mu_k$. Denote $\lambda_k = 1/\mu_k$. Then 
\begin{equation}
	\label{eq:eig-problem-with-lambda}
  0 < \lambda_1 \leq \lambda_2 \leq \cdots,
  \quad\text{and}\quad 
  a(u_k, v) = \lambda_k b(\gamma u_k, \gamma v)\quad \forall v \in V~.
\end{equation}

\subsection{Discretized eigenvalue problems and lower eigenvalue bounds}
\label{subsec:lower-bound}
Let $V_h$ be a finite-dimensional space for the purpose of  approximating $V$; $\mbox{dim}(V_h)=d (<\infty)$. It is assumed that $V_h$ and $V$ inherit the same addition and scalar multiplication from a certain larger vector space so that $V(h):=V+V_h$ is properly defined. 
Let $\widehat{a}(\cdot, \cdot)$ be a positive semidefinite bilinear form on $V(h)$ and whose restriction to $V_h$ is an inner product on  $V_h$. It is supposed that $\widehat{a}(\cdot, \cdot)$ is an extension of $a(\cdot, \cdot)$ to $V(h)$ in the sense that 
$$
\widehat{a}(u, v ) = a(u, v )~~\mbox{ for any } u, v \in V\:.
$$

Note that 
$\widehat{a}(\cdot, \cdot)$ is not necessarily positive definite on $V(h)$. An example of $\widehat{a}(\cdot, \cdot)$ being not positive definite on $V(h)$ is as follows \footnote{Thanks to Ryoki Endo for the construction of this example.}: $V=\{(x,0)\:|\:x\in \R\}$, $V_h=\{(0,y)\:|\:y\in \R\}$,  $V(h)=V+V_h=\mathbb{R}^2$,  
$\widehat{a}(u,v)=u^{\mathsf{T}} \begin{pmatrix}
 1 & 2 \\ 2 & 4 \end{pmatrix} v$ for $u,v\in V(h)$. 
The seminorm or norm induced by $\widehat{a}$ is still denoted by 
$\|\cdot\|_a$.

Assume that the operator $\gamma$ is extended to a compact linear operator $\widehat{\gamma}:V(h)\to W$, i.e., $\widehat{\gamma} u = \gamma u$  for $u \in V$.
In the rest of the paper, 
we henceforth write $a(\cdot, \cdot)$ and $\gamma$ for $\widehat a(\cdot, \cdot)$ and $\widehat\gamma$ when no confusion can arise.
 
 \medskip

 \paragraph{Discretized eigenvalue problem}
The approximation of the eigenvalue problem (\ref{eq:new-formulation-eig}) on $V_h$ is given by: Find $\mu_{h} \ge 0$ and $u_{h} \in V_{h}\setminus \{0\}$, such that
\begin{eqnarray}
\label{eq:eig-problem-n-m-h}
b(\gamma u_{h}, \gamma v_{h}) = \mu_h a(u_{h}, v_{h})\quad\forall v_{h}\in V_{h}~.
\end{eqnarray}
The approximate eigenvalues are denoted as
$$
\mu_{1,h} \ge \mu_{2,h} \ge \cdots \ge \mu_{{d'},h} > \mu_{{d'}+1,h} = \cdots  = 
\mu_{d,h}=0~.
$$
Here, $(d-{d'})$ is the dimension of the kernel of ${b(\gamma \cdot,\gamma \cdot)}$ on $V_h$. 
Let $\{u_{k,h}\}_{k=1}^{d}$ be $a$-orthonormal eigenfunctions
associated to the eigenvalues $\mu_{k,h}$.
Define $E_{k,h}:=\mbox{span}\{u_{1,h}, \cdots, u_{k,h}\}$.
 
Corresponding to \cref{eq:eig-problem-with-lambda}, we also consider its reciprocal  version: 
Find $\lambda_{h} > 0 $ and $u_{h} \in V_{h}, \gamma u_h \neq 0$ such that
\begin{eqnarray}
\label{eq:eig-problem-m-n-h}
a(u_{h},v_{h}) = \lambda_h b(\gamma u_{h}, \gamma v_{h})\quad\forall v_{h}\in V_{h}.
\end{eqnarray}
The above problem has ${d'}$ positive eigenvalues 
\begin{equation}
	\label{eq:def-lambda_k_h}
\lambda_{1,h} \le \lambda_{2,h}\le \cdots \le \lambda_{{d'},h}; \quad \lambda_{k,h}=1/\mu_{k,h}\quad (1\le k \le {d'})\:.
\end{equation}

\medskip

\medskip

The max-min and min-max principles for eigenvalues tell that
\begin{eqnarray}
&&    \mu_{k,h}=\max_{\substack{H_{k,h}\subset V_h}} ~\min_{\substack{v_h\in H_{k,h}\setminus \{0\}}}{R}(v_h) 
=\min_{\substack{S_{k-1,h}\subset V_h}} ~\max_{\substack{v_h\in V_h\setminus \{0\}, \\ ~v_h\perp S_{k-1,h}}} {R}(v_h)\:.
\label{eq:min-max-eig-subspace-fem}
\end{eqnarray}
 Here, $H_{k,h}$ denotes a $k$-dimensional subspace of  $V_h$,  and 
  $S_{k-1,h}$ denotes a subspace of  $V_h$ such that $\mbox{dim}(S_{k-1,h}) \le k-1$.  With the eigen-subspace  $E_{k,h}$, we also have
\begin{eqnarray}
&&\mu_{k,h}=\min_{\substack{v_h\in E_{k,h}\setminus \{0\}}} R(v_h) = 
\max_{\substack{v_h\in V_h\setminus\{0\}\\ v_h\perp E_{k-1,h}}} R(v_h) \:.
\label{eq:min-max-eigen-subspace-fem}
\end{eqnarray}

\medskip

Define the projection $P_h: V(h) \rightarrow V_h$ such that for any $u\in V(h)$,
\begin{equation}
  \label{eq:def-projection}
\widehat{a}(u-P_hu,v_h)=0\quad\quad\forall v_h\in V_h.
\end{equation}
{Note that $P_h$ is well defined because $\widehat{a}(\cdot,\cdot)$ is an inner product on  $V_h$ and $\widehat{a}(u,\cdot)$ is a bounded linear functional on $V_h$.}
Furthermore, it can easily be confirmed that $P_h^2=P_h$, i.e., $P_h$ is a projection.
According to \eqref{eq:def-projection}, we have a decomposition of $u\in V(h)$ with respect to the norm $\|\cdot\|_a$: 
\begin{equation}
	\label{eq:orthogonal-decomposition-wrt-a-norm}
\|P_h u \|_a^2+\|(I-P_h)u\|_a^2=\|u\|_a^2~.
\end{equation} 
Unlike \cite{Liu-2015}, we do not assume that $P_h$ is an orthogonal projection on $V(h)$; in general it need not be, since $\widehat a$ may fail to be an inner product on $V(h)$.

\medskip

Let us provide the theorem of explicit eigenvalue bounds using the projection $P_h$.

\begin{theorem}\label{thm:eigenvalue-explicit-bound} 
Suppose that the following relation holds for the projection error estimate of $P_h$:
\begin{equation}
\label{eq:projection-error-est}
  \|\gamma (I-P_h)v\|_b \le C_h \|(I-P_h)v\|_a \quad \forall v \in V(h)\:,
\end{equation}
with a certain computable quantity $C_h$. 
An explicit upper bound for $\mu_{k}$ is given by
\begin{equation}
  \label{eq:mu-upper-bound}
\mu_{k}\leq \mu_{k, h}+C_h^{2} , \quad k=1,2, \cdots, d.
\end{equation}
For $\mu_k\neq 0$, the above estimate yields the following lower bound for $\lambda_k$:
\begin{equation}
  \label{eq:lambda-lower-bound}
  \lambda_{k}\geq \frac{\lambda_{k, h}}{1+ C_h^{2} \lambda_{k,h}}, \quad k=1,2,\cdots, {d'}\:.
\end{equation}
\end{theorem}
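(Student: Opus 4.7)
The plan is to first establish the upper bound \eqref{eq:mu-upper-bound} for $\mu_k$ by applying the max-min characterization \eqref{eq:min-max-eig-subspace-fem} of $\mu_{k,h}$ to the $k$-dimensional trial space $P_hE_k\subset V_h$, and then to deduce \eqref{eq:lambda-lower-bound} from \eqref{eq:mu-upper-bound} by the elementary identity $\lambda_k=1/\mu_k\ge 1/(\mu_{k,h}+C_h^2)=\lambda_{k,h}/(1+C_h^2\lambda_{k,h})$.

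The technical core is a pointwise Rayleigh-quotient comparison: for every $v\in V$ with $P_hv\neq 0$,
\begin{equation*}
R(v)\ \le\ R(P_hv)+C_h^2.
\end{equation*}
I would prove this by setting $\alpha=\|P_hv\|_a$, $\beta=\|(I-P_h)v\|_a$, $p=\|\gamma P_hv\|_b$, and observing that \eqref{eq:orthogonal-decomposition-wrt-a-norm} gives $\|v\|_a^2=\alpha^2+\beta^2$, while the triangle inequality in $W$ combined with \eqref{eq:projection-error-est} gives $\|\gamma v\|_b\le p+C_h\beta$. Subtracting $R(v)$ from $R(P_hv)+C_h^2$ and clearing the common denominator $\alpha^2(\alpha^2+\beta^2)$ turns the numerator into the perfect square $(p\beta-C_h\alpha^2)^2\ge 0$, which establishes the claim.

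For $v\in E_k\setminus\{0\}$, expanding in the $a$-orthonormal eigenbasis $\{u_1,\dots,u_k\}$ and using $\mu_1\ge\cdots\ge\mu_k$ gives $R(v)\ge\mu_k$. Assuming $P_h|_{E_k}$ is injective, $P_hE_k$ is a $k$-dimensional subspace of $V_h$, so \eqref{eq:min-max-eig-subspace-fem} together with the key inequality yields
\begin{equation*}
\mu_{k,h}\ \ge\ \min_{v\in E_k\setminus\{0\}} R(P_hv)\ \ge\ \mu_k-C_h^2,
\end{equation*}
which is precisely \eqref{eq:mu-upper-bound}; the reciprocal bound \eqref{eq:lambda-lower-bound} then follows immediately whenever $\mu_k\neq 0$.

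The main obstacle I expect is the edge case in which $P_h$ fails to be injective on $E_k$, so that $P_hE_k$ is too small to serve as a $k$-dimensional trial space in \eqref{eq:min-max-eig-subspace-fem}. I would dispose of this case separately: any $v\in E_k\setminus\{0\}$ with $P_hv=0$ satisfies $\|\gamma v\|_b=\|\gamma(I-P_h)v\|_b\le C_h\|v\|_a$ by \eqref{eq:projection-error-est}, hence $\mu_k\le R(v)\le C_h^2\le \mu_{k,h}+C_h^2$, so \eqref{eq:mu-upper-bound} still holds trivially. This side argument is also what makes the proof genuinely simpler than the one in \cite{You-Xie-Liu-2019}, since no kernel-based decomposition of $V$ and no positive-definiteness assumption on $a(\cdot,\cdot)$ over $V(h)$ is needed.
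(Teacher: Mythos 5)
Your proof is correct and follows essentially the same route as the paper: the same case split on whether $P_h$ is injective on $E_k$, the same trial space $P_hE_k$ in the max--min principle, and the same underlying estimate --- your perfect-square identity $(p\beta-C_h\alpha^2)^2\ge 0$ is exactly the Cauchy--Schwarz step $\sqrt{\mu_{k,h}}\,\|P_h\phi\|_a+C_h\|(I-P_h)\phi\|_a\le\sqrt{\mu_{k,h}+C_h^2}\,\|\phi\|_a$ that the paper applies at the minimizer, merely repackaged as the pointwise comparison $R(v)\le R(P_hv)+C_h^2$. No gaps.
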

\begin{proof}
Let us start with the case that there exists a nonzero $v\in E_{k}(\subset V)$ such that $P_h v=0$. Then the error estimate for $P_h$ implies that 
$$
\|\gamma v \|_b = \|\gamma (I-P_h)v \|_b \le C_h \|(I-P_h)v \|_a = C_h \|v \|_a, \mbox{ i.e., } R(v) \le C_h^2.
$$
Thus, property (\ref{eq:min-max-eigen-subspace}) yields that $\mu_k \le  R(v) \le C_h^2\le \mu_{k,h}+C_h^2$. 

\medskip

Now  consider the case that for any nonzero $v\in E_k$, $P_hv\neq 0$, which implies $\mbox{dim}(P_h(E_k))=k$. 
With the special choice $H_{k,h}:=P_h(E_k)$, the characterization of $\mu_{k,h}$  in \eqref{eq:min-max-eig-subspace-fem} leads to
$$
 \min_{v_h \in P_h (E_k)\setminus\{0\}} R(v_h) \le \mu_{k,h}\:.
$$
Suppose that $v_h=P_h \phi$ ($\phi \in E_k$)  minimizes $R$ in $P_h (E_k)$.  Therefore, 
$$
\|\gamma P_h \phi \|_b \le  \sqrt{\mu_{k,h}} \|P_h \phi \|_a\:.
 $$
 By further utilizing the error estimate for $P_h$ and the decomposition of $\phi$ with respect to the $\|\cdot\|_a$ norm: $\|P_h \phi \|_a^2+\|(I-P_h)\phi\|_a^2=\|\phi\|_a^2$, we have 
 \begin{align*}
 \|\gamma  \phi\|_b 
 & \le  \|\gamma  P_h \phi\|_b + \|\gamma (I-P_h)\phi \|_b  \\
 & \le \sqrt{ \mu_{k,h}} ~ \|P_h \phi\|_a + C_h \|(I-P_h)\phi\|_a \\
 & \le \sqrt{\mu_{k,h}+C_h^2}~  \|\phi\|_a\:.
 \end{align*}
 Since $\phi \in E_k$, {the property of $\mu_k$ in  (\ref{eq:min-max-eigen-subspace})} gives the estimate $\mu_k \le R(\phi) \le \mu_{k,h}+C_h^2$.  

\quad
\end{proof}

\medskip

\begin{remark}
\label{remark:direct-lower-bound}
We highlight two features of  \cref{thm:eigenvalue-explicit-bound} and its proof. 
In Theorem 4.3 of \cite{Liu-2015}, it is assumed that $\widehat{a}(\cdot, \cdot)$ is positive definite over $V(h)$. 
The current theorem eliminates the need for this assumption, making it applicable to more complicated eigenvalue problems, such as the Maxwell eigenvalue problem, where the argument about the positive definiteness of $\widehat{a}(\cdot, \cdot)$ on $V(h)$ is delicate.
Another feature is that, by working with $\mu$ rather than $\lambda$, the proof here avoids the complicated discussion on the space decomposition $V = \mbox{Ker}(\mathcal{K} ) \oplus  \mbox{Ker}(\mathcal{K} )^\perp$, which has been used in \cite{You-Xie-Liu-2019} to handle the lack of positive  definiteness of $b(\gamma\cdot, \gamma\cdot)$ appearing in the Rayleigh quotient for $\lambda$.
\end{remark}

\medskip

\subsection{Application of projection-based eigenvalue bounds}
\label{section-steklov}

The lower eigenvalue bounds provided by Theorem \ref{thm:eigenvalue-explicit-bound} have been applied to various differential operators. 
In this section, as a preparation for high-precision eigenvalue bounds, we recall several known results for two model eigenvalue problems: the Laplacian eigenvalue problem and the Steklov eigenvalue problem.

The analysis is undertaken within the framework of Sobolev spaces.
Let $\Omega$ be a bounded domain in $\mathbb{R}^{k}$  with Lipschitz boundary. 
The $L^2(\Omega)$ function space is the set of real square-integrable functions on $\Omega$,
with  inner product denoted by $(\cdot, \cdot)_{\Omega}$ or $(\cdot, \cdot)$.
We shall use the standard notation for the Sobolev spaces $W^{k,p}(\Omega)$ and their
associated norms $\|\cdot\|_{k,p,\Omega}$ and seminorms $|\cdot|_{k,p,\Omega}$
(see, e.g., \cite{adams2003sobolev}).
For $p=2$, define
$H^k(\Omega)=W^{k,2}(\Omega)$, $\|\cdot \|_{k,\Omega} = \|\cdot\|_{k,2,\Omega}$, 
$|\cdot|_{k,\Omega} = |\cdot|_{k,2,\Omega}$.

For convenience of error analysis and computations, 
from now on we assume $\Omega\subset\mathbb{R}^2$ is bounded and polygonal.  Let $\mathcal{T}^h$ be a triangular subdivision of $\Omega$.
Given an element $K \in \mathcal{T}^h$, $h_K$ denotes the length of the longest edge of $K$. 
Let $h_{max}:=\max_{K\in \mathcal{T}^h} h_K$. That is, $h_{max}$ 
is the maximal edge length of the triangulation. 
Let us introduce the Crouzeix--Raviart finite element space ${V}_h^{\CR}$ defined on $\mathcal{T}^h$:
\begin{equation}
\label{fem space}
\begin{split}
{V}_h^{\CR} := & \{  v \:|\: v \mbox{ is a linear function on each $K\in \mathcal{T}^h$ and continuous at }\\
 & \mspace{28mu} \mbox{the midpoints of  interior edges}\}.
\end{split}
\end{equation}
Since ${V}_h^{\CR} \not\subset H^1(\Omega)$, we introduce the discrete gradient operator $\nabla_h$, which takes the gradient element-wise for $v_h\in {V}_h^{\CR}$. For simplicity, 
$\| \nabla_h v_h \|_{L^2(\Omega)}$ is still denoted by $\| \nabla v_h \|_{L^2(\Omega)}$ or $|v_h|_{1,\Omega}$.

\subsubsection{Laplacian eigenvalue problem}

We consider the Laplacian eigenvalue problem associated with the homogeneous Dirichlet boundary condition:

\medskip

Find $u\in H_0^1(\Omega)\setminus \{0\}$ and $\lambda \in \mathbb{R}$ such that
\begin{equation}
\label{eq:laplacian-evp}
(\nabla u, \nabla v)_\Omega=\lambda (u,v)_\Omega\quad \forall v\in H_0^1(\Omega)~.
\end{equation}

To apply Theorem \ref{thm:eigenvalue-explicit-bound}, we take the following setting for \eqref{eq:laplacian-evp} (see Remark \ref{re:evp-examples}):
$$
  \left\{
    \begin{array}{l} 
      \displaystyle V=H^1_0(\Omega), ~W=L^2(\Omega),~
      \displaystyle a(u,v)=(\nabla u, \nabla v)_\Omega, ~ b(u,v)=(u,v)_\Omega, \\      
\gamma : \mbox{ the compact embedding operator $H^1_0(\Omega)\hookrightarrow L^2(\Omega)$}.  
    \end{array}
  \right.
$$
Define $V_h$ by
$$
V_h=\{v_h \in V_h^{\CR}~|~ v_h=0 \text{ at midpoints of boundary edges of }\mathcal{T}^h \}\:.
$$
Define ${\gamma}_h: V_h \to L^2(\Omega)$ as the embedding operator, 
and define $\hat{\gamma} : V(h)\to L^2(\Omega)$ via 
$\hat{\gamma} (v+v_h):=\gamma v + \gamma_h v_h$ $(v\in V, v_h \in V_h)$ \footnote{Although the decomposition $v+v_h$ is not unique, the definition is well-posed since $\widehat{\gamma} v = \gamma v = \gamma_h v$ for $v\in V\cap {V_h}$.}.

Since $V_h$ is finite dimensional,  $\gamma_h$ is compact. Hence, 
using the compactness of $\gamma$, we have the compactness of $\hat{\gamma}$.  
Note that $v_h$ may be discontinuous across interior edges and does not necessarily vanish on $\partial \Omega$.
The bilinear form $\widehat{a}(\cdot,\cdot)$ for ${V}(h)$ is defined by
\begin{equation}
\widehat{a}(u_h,v_h) := \sum_{K\in \mathcal{T}^h}\int_{K} 
\nabla u_h \cdot \nabla v_h \,\ud x
                          \quad \forall\, u_h, v_h\in {V}(h)\:.
\end{equation}
For any $u,v\in V$,  we have $\widehat{a}(u,v)=a(u,v)$. 

\paragraph{Explicit lower bounds for Laplacian eigenvalues}

Let $\lambda_{k,h}^{\CR}$ be the $k$-th discretized eigenvalue given by the Crouzeix--Raviart FEM. Then, the following lower bound is given by Theorem \ref{thm:eigenvalue-explicit-bound}.
\begin{equation}
	\label{eq:lower-bound-for-laplacian}
\lambda_k \ge \frac{\lambda_{k,h}^{\CR}}{1 + C_h^2\lambda_{k,h}^{\CR}}\quad (1\le k \le \text{dim}(V_h))~.
\end{equation}
Here, for the triangulation of a 2D domain, $C_h$ can be chosen as $C_h=0.1893h_{max}$ \cite{Liu-2015}.
If we would use conforming FEMs instead of  the Crouzeix--Raviart FEM, the computation of $C_h$ would be more complicated; see \cite{Liu-Oishi-2013}.
It is worth pointing out that to have a guaranteed lower bound of $\lambda_k$ using \cref{eq:lower-bound-for-laplacian}, one has to rigorously solve the eigenvalue problem of the discretized matrices from the Crouzeix--Raviart FEM to obtain an exact lower bound of $\lambda_{k,h}^{\CR}$. 
Note that matrix eigenvalue enclosures are accessible by, for example, the algorithm proposed in \cite{Behnke-1991}.

\subsubsection{Steklov eigenvalue problem}
A specific feature here is that the bilinear  form $b(\cdot,\cdot)$ induces a nonzero kernel of $\mathcal{K}=G\gamma$.

\paragraph{The Steklov eigenvalue problem} Find $\lambda >0$ and $u \in H^1(\Omega)$, $\gamma u \neq 0$ such that
\begin{equation}\label{eq:steklov_evp}
(\nabla u, \nabla v)_\Omega + (u,v)_\Omega=\lambda(\gamma u,\gamma v)_{\partial\Omega} \quad \forall\, v\in H^1(\Omega).
\end{equation}
Here, $\gamma:H^1(\Omega)\mapsto L^2(\partial\Omega)$ is the trace operator.
Since $\Omega$ is a bounded Lipschitz domain, $\gamma$ is a compact operator; see, e.g., \cite{Demengel-2012}.

To apply Theorem \ref{thm:eigenvalue-explicit-bound}, we take the following setting (see Remark \ref{re:evp-examples})
\begin{equation*}
	\label{eq:setting-for-steklov}
V:=H^1(\Omega), ~W:=L^2(\partial\Omega),~ 
{a(u,v)} := (\nabla u, \nabla v)_\Omega + (u,v)_\Omega,~
{b(u,v)} := (u,v)_{\partial\Omega}.
\end{equation*}

For the discretized eigenvalue problem, take  $V_h:=V_h^{\CR}$ and ${V}(h):=V+{V}_h$. Let ${\gamma_h}:V_h\to W$ be the piecewise trace operator defined for functions on the boundary elements of $\mathcal{T}^h$. For $v+v_h \in V(h)$ with $v\in V$ and $v_h\in V_h$, $\widehat{\gamma} (v+v_h):=\gamma v + \gamma_h v_h$.\footnote{See the footnote for the case of Laplacian eigenvalue problem.}

Define
$\widehat{a}(\cdot,\cdot)$ on  ${V}(h)$ by
\begin{equation}
\widehat{a}(u_h,v_h) := \sum_{K\in \mathcal{T}^h}\int_{K}\nabla u_h \cdot \nabla v_h +  u_h  v_h\,\ud x
                          \quad \forall\, u_h, v_h\in {V}(h)\:.
\end{equation}
It holds that for $u,v\in V$, $\widehat{a}(u,v)=a(u,v)$. 

\medskip

Let $P_h:V(h) \mapsto {V}_h$ be the  projection as defined in \cref{eq:def-projection}. 
To apply \cref{thm:eigenvalue-explicit-bound}, the following projection error estimate is required.
\begin{equation*}
\label{main_result_ch}
\|\gamma (u-P_h u)\|_{L^{2}(\partial \Omega)} \leq C_h \|u-P_h u\|_{1,\Omega},\,\,\forall u \in {V}\:.
\end{equation*}
In \cite{You-Xie-Liu-2019}, by using the computed value of $\lambda_{1,h}$ from \eqref{eq:eig-problem-m-n-h}, an explicit value of the quantity $C_h$  is provided as
\begin{equation}
\label{eq:C_h_definition_steklov}
C_h := 0.6711
        \max_{K\in \mathcal{T}_b^h}\frac{h_K}{\sqrt{H_K}}
      + \frac{0.1893}{\sqrt{\lambda_{1,h}}}\max_{K\in \mathcal{T}^h} h_K ~, 
\end{equation}
{under the assumption that all  elements of $\mathcal{T}^{h}_b$ have exactly one edge on $\partial \Omega$.}
Here, $\mathcal{T}_b^h$  is the set of elements of $\mathcal{T}^h$ having an edge on $\partial \Omega$;  for $K \in \mathcal{T}_b^h$, $H_K$ denotes the height of $K$ with respect to its boundary edge.

It is worth pointing out that in \cref{eq:C_h_definition_steklov}, except for the term $\lambda_{1,h}$ determined by the eigenvalue problem on the domain, the other terms are only related to the mesh and are essentially independent of the shape of the domain.
{Note that, again, conforming FEMs instead of the
Crouzeix--Raviart FEM would complicate the computation of $C_h$; see \cite{NakanoLiYueLiu+2023}.}

\medskip

\paragraph{Explicit lower bounds for Steklov eigenvalues}
Let $\lambda_{k,h}^{\CR}$ be the $k$-th discretized eigenvalues given by the Crouzeix--Raviart FEM in $V_h$.
With \cref{thm:eigenvalue-explicit-bound} and the explicit projection error estimate for $P_h$, 
we obtain explicit lower bounds for Steklov eigenvalues of \cref{eq:steklov_evp},
\begin{equation}
\label{eq:lower-bound-for-steklov}
\lambda_k \ge  \frac{\lambda_{k,h}^{\CR}}{1+C_h^2 \lambda_{k,h}^{\CR}} ~~( k=1,2,\cdots, {d'})\:.
\end{equation}
Here, ${d'}
=\text{dim}(V_h) - \text{dim}(\text{Ker}(\gamma_h))$, 
$\text{Ker}(\gamma_h) = \{v_h \in {V}_h \,|\, \gamma_h v_h =0\}$.

\begin{remark}
Because $C_h$ (for nonconforming but also for conforming FEMs) is governed by the global maximal edge length of the triangulation, the projection-based bounds \eqref{eq:lower-bound-for-laplacian} and \eqref{main-result-summary} work well on nonuniform meshes with moderate grading, but their accuracy deteriorates on strongly graded meshes where element sizes vary greatly (i.e., very small and very large elements coexist).
This is one motivation for us to introduce our two-stage method.	
\end{remark}
\begin{remark}
\label{remark:sub-optimal-rate-steklov}
Note that since $C_h=O(\sqrt{h})$ as $h\to 0$, the lower eigenvalue bound obtained in
\cref{eq:lower-bound-for-steklov} only converges at rate $O(h)$. This is not
optimal when compared with the approximate eigenvalues themselves, which converges at rate $O(h^2)$ for $H^{2}$-regular solutions; see, e.g., \cite{Yang2010}.
\end{remark}

\section{Second stage: high-precision eigenvalue bounds using Lehmann--Goerisch's theorem}
\label{section:second_stage}
So far, in the first stage of studying eigenvalue bounds, we have successfully constructed projection-based  lower eigenvalue bounds by utilizing for example the nonconforming Crouzeix--Raviart element space.
To provide sharp eigenvalue bounds, a natural idea is to refine the mesh or increase the degree of the polynomials used in the finite element spaces.
For nonconforming FEMs, one can refine the mesh to improve the precision, but a high-order Crouzeix--Raviart-like FEM is generally not available.

Instead, in the second stage of our proposed algorithm to obtain high-precision eigenvalue bounds, let us introduce Lehmann--Goerisch's theorem using high-order conforming FEMs and the rough eigenvalue bounds obtained in the first stage.

The idea of adopting Lehmann--Goerisch's theorem and  conforming FEMs has also been successfully used to bound
the Poincar{\'{e}} constants, where the high-order conforming FEM produces quite efficient eigenvalue bounds (see \cite{liu-2013-2}), and to give computer-assisted existence and multiplicity proofs for various non-linear boundary value problems (see, e.g.,  \cite{plum1993numerical,breuer2003multiple,breuer2006computer,nakao2019numerical_book});

\subsection{Lehmann--Goerisch's theorem}

Let us introduce the formulation of the eigenvalue problem to be investigated by applying the Lehmann--Goerisch theorem.

\medskip

Find $u \in D$ and $\lambda \in \mathbb{R}$ such that,
\begin{equation}
\label{eq:evp-lg-method}
M(u,v) = \lambda N(u,v)\quad \forall v\in  D~.
\end{equation}
The notation follows the original one used by F. Goerisch. Compared with the eigenvalue problem defined in \cref{eq:objective-evp}, we have 
$$
D=V, \quad M(\cdot, \cdot)=a(\cdot, \cdot), \quad  N(\cdot, \cdot)=b(\gamma \cdot, \gamma \cdot)~.
$$ 

Lehmann--Goerisch's theorem first declares the existence of eigenvalues in a certain range. By further utilizing rough eigenvalue bounds, which can be obtained from the first stage of our algorithm, one can obtain high-precision bounds for eigenvalues of \cref{eq:evp-lg-method}. 

\medskip

Now, let us display the assumptions made for Lehmann--Goerisch's theorem \footnote{The assumptions here are the same as in  \cite{goerisch1985eigenwertschranken, goerisch1986convergence}. 
Note that the assumptions in \cite{goerisch1985eigenwertschranken} are formulated with operators, while in a later paper of Goerisch and Albrecht \cite{goerisch1986convergence}, the eigenvalue problem and the assumptions are formulated with  bilinear forms.}.

\medskip

{\bf Assumption:}
\begin{enumerate}
 \item [A1]
  Let $D$ be a real vector space, $M(\cdot, \cdot)$ and $N(\cdot,\cdot)$ be symmetric bilinear forms on  $D$. $M(\cdot, \cdot)$ is positive definite on $D$.
 \item[A2] There exists a sequence $\{ (\lambda_i, \varphi_i)\}_{i\in \mathbb{N}}$ in $\mathbb{R}\times D$ of eigenpairs satisfying 
 $$
 M(\varphi_i, v) =\lambda_i  N(\varphi_i,v)\quad  \forall v\in D;\quad
M(\varphi_i,\varphi_j )=\delta_{ij} \quad \text{for } i,j\in\mathbb{N}~.
 $$
 Here, $\delta_{ij}$ is the Kronecker delta.
 Moreover, 
 \begin{equation}
N(v,v) = \sum\limits_{i=1}^\infty \lambda_i\left| N(v,\varphi_i) \right|^2 \quad \forall v\in D~.
 \end{equation}

\item[A3] ~Let $X$ be a real vector space and $b_G$ be a symmetric positive semidefinite bilinear form on $X$. Let $T:D\rightarrow X$ be a linear operator such that
\begin{equation*}	
b_G(Tu,Tv)=M(u, v)\quad\forall u,v\in D~.
\end{equation*}
\item [A4] ~Given   $v_1, v_2, \cdots, v_n\in D$, let $w_1, w_2, \cdots, w_n \in X$ satisfy
\begin{equation}
	\label{eq:Lehmann--Goerisch-v-to-w}
b_G(w_i, T v)=N(v_i,v)\quad \forall v  \in D \quad (i=1,\dots,n)~.
\end{equation}
\item [A5] Given $\rho\in\mathbb{R}$, $\rho>0$,
define $n\times n$ matrices $A_0, A_1, A_2, A, B$ by
$$
A_0 \!:= \big(M(v_i,v_j)\big)_{i,j=1}^n, \,
A_1 \!:= \big(N(v_i,v_j)\big)_{i,j=1}^n, \,
A_2 \!:= \big(b_G(w_i,w_j)\big)_{i,j=1}^n, \, 
$$
\begin{equation}
\label{eq:A-B-in-LH-Thm}
A := A_0-\rho A_1, ~~ B := A_0-2\rho A_1 + \rho^2 A_2.
\end{equation}

Suppose $B$ is positive definite. Let $\nu_1 \le \nu_2 \le \cdots \le \nu_n$ be the eigenvalues of $Az=\nu Bz$. 
Let $q$ be the number of negative eigenvalues among them. 
\end{enumerate}

\medskip

Now, we quote the result from Lehmann--Goerisch's theorem.

\begin{theorem}[Lehmann--Goerisch's theorem] 
\label{thm:Lehmann--Goerisch}
Under {assumptions A1-A5}, for  each $k$ $(1\le k \le q)$, 
the interval $\left[\rho -\rho/(1-\nu_k),\rho\right)$ contains at least $k$ eigenvalues of  
   $\{\lambda_i\}_{i\in \mathbb{N}}$, counting by multiplicity.
\end{theorem}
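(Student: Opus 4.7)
The plan is to deduce the enclosure by a contradiction argument that combines spectral expansions of the three bilinear forms $M$, $N$, and $b_G$ with the min-max characterization of $\nu_k$. Fix $k\in\{1,\dots,q\}$ and write $\rho'_k := \rho-\rho/(1-\nu_k) = \rho\nu_k/(\nu_k-1)$, noting that $\nu_k<0$ gives $0\le\rho'_k<\rho$; the goal is to produce at least $k$ indices $i$ with $\lambda_i\in[\rho'_k,\rho)$.

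The first step is a triple of quadratic-form estimates. Given $z\in\mathbb{R}^n$, set $u:=\sum_j z_j v_j\in D$, $w:=\sum_j z_j w_j\in X$, and $s_i:=|N(u,\varphi_i)|^2$. Then A2 gives $N(u,u)=\sum_i\lambda_i s_i$ exactly; A1 and A2 make $\{\varphi_i\}$ an $M$-orthonormal family in the inner-product space $(D,M)$ with $M(u,\varphi_i)=\lambda_i N(u,\varphi_i)$, so Bessel's inequality gives $M(u,u)\ge\sum_i\lambda_i^2 s_i$; and A3 makes $\{T\varphi_i\}$ a $b_G$-orthonormal family in $X$ whose Fourier coefficients with respect to $w$ are precisely $N(u,\varphi_i)$ by A4, yielding $b_G(w,w)\ge\sum_i s_i$. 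Combining these,
\begin{equation*}
z^T A z \ge \sum_i \lambda_i(\lambda_i-\rho)\,s_i, \qquad z^T B z \ge \sum_i (\lambda_i-\rho)^2 s_i .
\end{equation*}

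The second step is the contradiction. Suppose fewer than $k$ eigenvalues $\lambda_i$ (counted with multiplicity) lie in $[\rho'_k,\rho)$, and let $I$ be the corresponding index set, so $|I|<k$. The min-max characterization of $\nu_k$ for $Az=\nu Bz$ yields a $k$-dimensional subspace $Y_k\subset\mathbb{R}^n$ with $z^T A z\le\nu_k\,z^T B z$ for every $z\in Y_k$. Since the $|I|$ linear conditions $N(u,\varphi_i)=0$ ($i\in I$) have codimension at most $|I|<k$ in $Y_k$, one can select a nonzero $z^*\in Y_k$ for which $s_i^*=0$ whenever $i\in I$. The elementary pointwise inequality
\begin{equation*}
\lambda_i(\lambda_i-\rho)\ge\nu_k(\lambda_i-\rho)^2 \quad \text{for } \lambda_i\notin[\rho'_k,\rho),
\end{equation*}
which follows directly from $\rho'_k=\rho\nu_k/(\nu_k-1)$ and $\nu_k<0$, together with the two quadratic-form bounds above and $\nu_k<0$, then gives
\begin{equation*}
(z^*)^T A z^* \ge \nu_k\sum_i(\lambda_i-\rho)^2 s_i^* \ge \nu_k(z^*)^T B z^* .
\end{equation*}
Combined with the min-max bound $(z^*)^T A z^*\le\nu_k(z^*)^T B z^*$, every inequality becomes equality, and a short case analysis---handling the degenerate possibility $\lambda_i\in\{\rho'_k,\rho\}$ by a small perturbation of $\rho$ if necessary---forces all $s_i^*=0$, hence $(z^*)^T B z^*=0$, contradicting the positive definiteness of $B$ from A5.

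The step I expect to be the main obstacle is the Bessel-type inequality $b_G(w,w)\ge\sum_i s_i$. Because $b_G$ is only positive semidefinite on $X$, the standard Bessel argument must be carried out modulo the kernel of $b_G$, and one must verify rigorously that the $b_G$-orthonormality of $\{T\varphi_i\}$ and the identification of Fourier coefficients from A4 transfer to the quotient Hilbert space $X/\ker(b_G)$. Once this completeness-type step is in place, the remainder is essentially bookkeeping in the spectral coefficients $s_i$ together with the pointwise quadratic inequality.
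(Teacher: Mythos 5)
The paper does not prove this theorem---it is quoted from Goerisch's work \cite{goerisch1985eigenwertschranken,goerisch1986convergence}---so there is no in-paper argument to compare against; your proof is the classical one and, after checking the details, it is correct. Two remarks on the spots you flagged. First, the step you expect to be the main obstacle, $b_G(w,w)\ge\sum_i s_i$, is not an obstacle at all: Bessel's inequality needs only positive semidefiniteness, since for any finite $m$ and $c_i:=b_G(w,T\varphi_i)=N(u,\varphi_i)$ one has $0\le b_G\bigl(w-\sum_{i\le m}c_iT\varphi_i,\,w-\sum_{i\le m}c_iT\varphi_i\bigr)=b_G(w,w)-\sum_{i\le m}c_i^2$ by the $b_G$-orthonormality $b_G(T\varphi_i,T\varphi_j)=M(\varphi_i,\varphi_j)=\delta_{ij}$; no quotient space and no completeness are required. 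Second, the ``small perturbation of $\rho$'' at the end is both unnecessary and a little dangerous (perturbing $\rho$ changes $A$, $B$, $\nu_k$ and the interval itself), but the direct argument closes cleanly: equality in $\sum_{i\notin I}[\lambda_i(\lambda_i-\rho)-\nu_k(\lambda_i-\rho)^2]s_i^*=0$ forces, for each $i\notin I$ with $s_i^*>0$, that $\lambda_i$ is a root of $(t-\rho)[(1-\nu_k)t+\nu_k\rho]$, i.e.\ $\lambda_i\in\{\rho'_k,\rho\}$; the value $\rho'_k$ is excluded because it lies in the half-open interval and would put $i$ into $I$, so $\lambda_i=\rho$ and hence $\sum_i(\lambda_i-\rho)^2s_i^*=0$. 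It is not true in general that all $s_i^*$ vanish, but equality in the last link of your chain gives $(z^*)^{\mathsf T}Bz^*=\sum_i(\lambda_i-\rho)^2s_i^*=0$ anyway, contradicting the positive definiteness of $B$. With these two cosmetic repairs the argument is complete.
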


\begin{lemma}
\label{lem:Lehmann}
Given $v \in D$, let  $\tilde{u}  \in D$ be the solution to the equation: 
\begin{equation}
	\label{eq:lehmann-setting}
 M(\tilde{u}, \phi)=N(v,\phi)\quad \forall \phi  \in D ~.
\end{equation}
For any $w \in X$  satisfying
\begin{equation*}
b_G(w, T \phi )=N(v,\phi)\quad \forall \phi  \in D ~,
\end{equation*}
we have $b_G(w, w) \ge b_G(T\tilde{u}, T\tilde{u})=M(\tilde{u}, \tilde{u})$.
\end{lemma}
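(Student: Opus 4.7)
The plan is to exploit the identity $b_G(Tu,Tv)=M(u,v)$ from A3 together with the two defining equations to derive a ``Pythagorean'' decomposition of $b_G(w,w)$ relative to $T\tilde u$, and then read off the inequality from the positive semidefiniteness of $b_G$. This is a standard best-approximation argument of Galerkin type, with $T\tilde u \in X$ playing the role of the orthogonal projection of $w$ onto the image $T(D)$ with respect to the $b_G$-semi-inner-product.

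First I would observe, by setting $\phi=\tilde u$ in the defining equation \eqref{eq:lehmann-setting} of $\tilde u$ and using A3, that
\begin{equation*}
M(\tilde u,\tilde u)=N(v,\tilde u)=b_G(T\tilde u, T\tilde u),
\end{equation*}
which already gives the stated equality $b_G(T\tilde u,T\tilde u)=M(\tilde u,\tilde u)$. Next, I would substitute $\phi=\tilde u$ in the relation satisfied by $w$ to obtain
\begin{equation*}
b_G(w,T\tilde u)=N(v,\tilde u)=M(\tilde u,\tilde u)=b_G(T\tilde u,T\tilde u),
\end{equation*}
i.e.\ the orthogonality $b_G(w-T\tilde u,T\tilde u)=0$.

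With this orthogonality in hand, the conclusion follows by expanding
\begin{equation*}
b_G(w,w)=b_G\bigl((w-T\tilde u)+T\tilde u,\,(w-T\tilde u)+T\tilde u\bigr),
\end{equation*}
using bilinearity and symmetry of $b_G$, and cancelling the cross term via the orthogonality above. What remains is
\begin{equation*}
b_G(w,w)=b_G(w-T\tilde u,\,w-T\tilde u)+b_G(T\tilde u,T\tilde u),
\end{equation*}
and since $b_G$ is positive semidefinite on $X$ (part of A3), the first summand is nonnegative, so $b_G(w,w)\ge b_G(T\tilde u,T\tilde u)=M(\tilde u,\tilde u)$.

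There is essentially no hard step here; the only thing to be careful about is that $b_G$ is merely positive semidefinite (not definite) on $X$, so one cannot conclude uniqueness of $w$ minimising $b_G(w,w)$ under the constraint, only the inequality. The argument uses nothing beyond A3 and the two linear functional identities defining $\tilde u$ and $w$, and in particular does not require the spectral assumption A2 or the choice of basis in A4--A5.
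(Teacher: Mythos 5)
Your proof is correct and follows essentially the same route as the paper: both decompose $w = T\tilde u + (w - T\tilde u)$, establish $b_G(w-T\tilde u, T\tilde u)=0$ from the two defining identities, and conclude by positive semidefiniteness of $b_G$ (the paper shows the slightly stronger orthogonality $b_G(w-T\tilde u, T\phi)=0$ for all $\phi\in D$, but only the case $\phi=\tilde u$ is needed, exactly as you use).
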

\begin{proof}
Let $\widehat{w}:=w-T\tilde{u} \in X$.  Since  
$$
b_G(\widehat{w}, T \phi )=
 b_G(w, T \phi ) - M(\tilde{u}, \phi) = N(v,\phi)-N(v,\phi)=0 \quad \forall \phi  \in D\:,$$ 
we have
$$
b_G(w, w)= b_G(T\tilde{u}+\widehat{w}, T\tilde{u}+\widehat{w})
=b_G(T\tilde{u}, T\tilde{u} )+ b_G(\widehat{w}, \widehat{w}) \ge b_G(T\tilde{u}, T\tilde{u}).
$$

\end{proof}
\begin{remark}
{(Selection of the $w_i$ term)}
\label{remark:selection_of_w_i}
Generally, there is freedom to select $w_i$ under the condition \eqref{eq:Lehmann--Goerisch-v-to-w}. A selection of $w_i$'s with  ``small" values of $b_G(w_i,w_i)$ will produce ``good" lower eigenvalue bounds. 
In fact, $b_G(w_i,w_i)$ under the constraint condition \eqref{eq:Lehmann--Goerisch-v-to-w} is minimized by $w_i = T \widetilde{u}_i$, where $\widetilde{u}_i$ is obtained from the standard setting of Lehmann's method by solving \eqref{eq:lehmann-setting}.
However, this original Lehmann's equation is generally difficult (or impossible)  to solve explicitly.
The introduction of the $w_i$ term  by Goerisch (called ``Goerisch's XbT-concept"; see, e.g., \cite[Lemma 10.25]{nakao2019numerical_book})  makes it flexible to construct $w_i$ to obtain eigenvalue bounds for the target eigenvalue problem. In practical computing, we construct a finite dimensional space $X_h(\subset X)$ and compute the $w_i$ term by solving the minimization problem (see  \cite[Remark 5]{liu-2013-2} and  \cite[Remark 10.26 (c)]{nakao2019numerical_book}):
\begin{equation}
\label{eq:minimization-problem-in-LG}
	\min_{w_i \in X_h} b_G(w_i, w_i) \text{ subject to condition \eqref{eq:Lehmann--Goerisch-v-to-w}}\:.  
\end{equation}
To have a rigorous eigenvalue bound, the  minimization problem \cref{eq:minimization-problem-in-LG}	 does nevertheless not need to be solved exactly, while the condition \eqref{eq:Lehmann--Goerisch-v-to-w} has to be satisfied strictly.
\end{remark}

\bigskip

From now on, suppose $A_1$ is positive definite. 
Let $\Lambda_n$ be the maximal eigenvalue of $A_0x=\Lambda A_1 x$.

Effort has been paid to validate the positive definiteness of matrix $B$ in assumption A5, see, e.g., 
\cite[Theorem 10.10, 10.21, 10.29]{nakao2019numerical_book}.
The following Lemma provides a direct condition for this aim.

{
\begin{lemma}
\label{lem:cond-for-positive-B}
In the case where $\rho > \Lambda_n$, the matrix $B$ in equation \eqref{eq:A-B-in-LH-Thm} is indeed positive definite, as required in assumption A5.
\end{lemma}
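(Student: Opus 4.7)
The plan is to show that $x^\top B x > 0$ for every nonzero $x \in \mathbb{R}^n$. First I would set $v := \sum_{i=1}^n x_i v_i \in D$ and $w := \sum_{i=1}^n x_i w_i \in X$; bilinearity of $M$, $N$, and $b_G$ gives
\[
x^\top A_0 x = M(v,v), \quad x^\top A_1 x = N(v,v), \quad x^\top A_2 x = b_G(w,w),
\]
so $x^\top B x = M(v,v) - 2\rho N(v,v) + \rho^2 b_G(w,w)$. Taking the same linear combination of \eqref{eq:Lehmann--Goerisch-v-to-w} shows $b_G(w, T\phi) = N(v,\phi)$ for all $\phi \in D$, so \cref{lem:Lehmann} yields $b_G(w,w) \ge M(\tilde u, \tilde u)$, where $\tilde u \in D$ is the unique solution of $M(\tilde u, \phi) = N(v,\phi)$ for all $\phi \in D$ (uniqueness by A1).

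Next I would complete the square. Testing the defining equation of $\tilde u$ at $\phi = v$ gives $M(v,\tilde u) = N(v,v)$, and substituting this into the $M$-expansion of $M(v-\rho \tilde u, v-\rho \tilde u)$ yields the identity
\[
M\bigl(v - \rho \tilde u,\, v - \rho \tilde u\bigr) = M(v,v) - 2\rho N(v,v) + \rho^2 M(\tilde u, \tilde u).
\]
Combining with the Lemma bound gives $x^\top B x \ge M(v - \rho \tilde u, v - \rho \tilde u) \ge 0$, and the remaining task is to exclude the equality $v = \rho \tilde u$ whenever $x \ne 0$.

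The hypothesis $\rho > \Lambda_n$ is used precisely here. Since $A_1$ is positive definite, $x \ne 0$ implies $N(v,v) = x^\top A_1 x > 0$, in particular $v \ne 0$. If $v = \rho \tilde u$ held, the defining equation of $\tilde u$ would read $M(v, \phi) = \rho N(v, \phi)$ for all $\phi \in D$, making $(v,\rho)$ an eigenpair of the continuous problem with Rayleigh quotient $M(v,v)/N(v,v) = \rho$; but $v \in \operatorname{span}(v_1, \dots, v_n)$, and the max-characterization of the generalized matrix eigenvalue problem $A_0 z = \Lambda A_1 z$ (valid because $A_1$ is positive definite) gives $M(v,v)/N(v,v) \le \Lambda_n < \rho$, a contradiction. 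Hence $v - \rho \tilde u \ne 0$, and positive definiteness of $M$ on $D$ (Assumption A1) yields $M(v - \rho \tilde u, v - \rho \tilde u) > 0$, so $x^\top B x > 0$. The only non-routine step I expect is spotting the completion-of-squares identity above; after that, $\rho > \Lambda_n$ serves exactly to rule out the single degenerate case in which $v$ would be an eigenfunction for the critical value $\rho$, and the rest of the argument is essentially bookkeeping plus one invocation of Rayleigh--Ritz.
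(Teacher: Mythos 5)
Your proof is correct and follows essentially the same route as the paper: reduce $x^\top B x$ to $M(v,v)-2\rho N(v,v)+\rho^2 b_G(w,w)$ for $v=\sum_i x_i v_i$, bound $b_G(w,w)$ from below via Lemma~\ref{lem:Lehmann}, complete the square, and use $\rho>\Lambda_n$ together with the Rayleigh--Ritz characterization of $\Lambda_n$ (valid since $A_1$ is positive definite) to exclude the degenerate case. The only cosmetic difference is that you complete the square in $D$ with $M(v-\rho\tilde u,\,v-\rho\tilde u)$, whereas the paper works in $X$ with $b_G(Tv-\rho T\tilde u,\,Tv-\rho T\tilde u)$ --- the same quantity by A3 --- and your variant lets you conclude strict positivity directly from the positive definiteness of $M$ on $D$, sidestepping the paper's extra Cauchy--Schwarz step for the merely semidefinite form $b_G$.
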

\begin{proof}
Let $v$ ($\neq 0$) be a linear combination of $v_1, v_2, \ldots, v_n \in D$. 
Let $\tilde{u}$ be the solution of equation \eqref{eq:lehmann-setting} with $v_i$ replaced by $v$. That is,
\begin{equation}
	\label{eq:local_B_positive_1}
M(\tilde{u}, \phi)=N(v,\phi), ~\forall \phi \in D.
\end{equation}
Let $\widehat{w} := T\tilde{u}$. We have $b_G(\widehat{w}, Tv)= b_G(T\tilde{u}, Tv)=
M(\tilde{u}, v)=N(v,v)$, and 
\[
0 \leq b_G(Tv - \rho \widehat{w}, Tv - \rho \widehat{w}) = M(v,v) - 2\rho N(v,v) + \rho^2 b_G(\widehat{w}, \widehat{w}).
\]
Moreover,  $b_G(Tv - \rho \widehat{w}, Tv)  \ne 0$, because otherwise, 
$$
M(v,v)=b_G(Tv, Tv)=b_G(\rho \widehat{w}, Tv)
=\rho N(v,v).
$$ 
However, $M(v,v) - \rho N(v,v) =0$  contradicts the assumption that $\rho > \Lambda_n$.

From the confirmed  fact that $b_G(Tv - \rho \widehat{w}, Tv)  \ne 0$, we have
\footnote{Note that, for any symmetric and positive semidefinite bilinear form $b_G(\cdot, \cdot)$,  $b_G(f,f)=0$ implies $b_G(f,g)=0$ for all $g\in X$ due to the Cauchy--Schwarz inequality.}
\begin{equation}
	\label{eq:local_2}
b_G(Tv - \rho \widehat{w}, Tv - \rho \widehat{w}) > 0.
\end{equation}
Given any $w\in X$   such that
\[
b_G(w, T\phi) = N(v, \phi) \quad \forall \phi \in D,
\]
we have 
 $b_G(w, w) \geq b_G(\widehat{w}, \widehat{w})$ by Lemma \ref{lem:Lehmann}. 
By further using the inequality \eqref{eq:local_2}, 
\begin{align*}
& M(v,v) - 2\rho N(v,v) + \rho^2 b_G(w, w) \ge M(v,v) - 2\rho N(v,v) + \rho^2 b_G(\widehat{w}, \widehat{w}) &\\
& = b_G(Tv - \rho \widehat{w}, Tv - \rho \widehat{w}) > 0.&	
\end{align*}
This implies that the matrix $B$ is positive definite.
\end{proof}

 }
 
 \medskip
 
 The following theorem gives an application of Theorem \ref{thm:Lehmann--Goerisch}  to obtaining lower  bounds for $n$ eigenvalues: $\lambda_{m-n+1}, \cdots,\lambda_{m}$, by using a lower bound (denoted by $\rho$)  of $\lambda_{m+1}$; see Figure \ref{fi:eigenvalues-for-LG-method} for an illustration  of the involved eigenvalues.

\begin{figure}[h]
\begin{center}
\includegraphics[width=10cm]{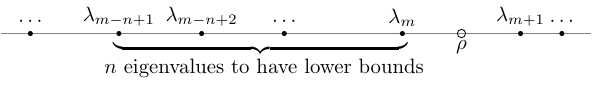}
\end{center}
\caption{Bounding eigenvalues by using Lehmann--Goerisch's theorem.}
\label{fi:eigenvalues-for-LG-method}
\end{figure}

\medskip

\begin{theorem}
\label{theorem:lower_bound_for_first_n_eigenvalues}
Suppose $\rho$ is a lower bound to the  $(m+1)$th eigenvalue $\lambda_{m+1}$ for some $m \ge n$ ($m\in \mathbb{N}$), i.e., 
\begin{equation}
	\label{eq:condition_for_rho}
\rho \le \lambda_{m+1}~.
\end{equation}
If assumptions A1-A5 are satisfied, then the following  lower eigenvalue bound holds.
\begin{equation}\label{Lehmann--Goerisch bounds}
\lambda_{m+1-k} \geq \rho - \frac{\rho}{1 -\nu_k} \quad (1\leq k\leq q)\:.
\end{equation}
In particular, if $\rho>\Lambda_n$,
then $\nu_n<0$, i.e., $q=n$, and  
\begin{equation}\label{Lehmann--Goerisch bounds-2}
\lambda_{m+1-k} \geq \rho - \frac{\rho}{1 -\nu_k} \quad (1\leq k\leq n)\:.
\end{equation}

\end{theorem}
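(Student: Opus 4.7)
The plan is to feed the Lehmann--Goerisch conclusion of Theorem \ref{thm:Lehmann--Goerisch} into a pigeon-hole argument using the hypothesis $\rho \le \lambda_{m+1}$, and, for the stronger statement, to verify separately that $\rho > \Lambda_n$ forces every $\nu_k$ to be negative so that $q = n$.

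For the first claim, I would fix $k$ with $1 \le k \le q$. Theorem \ref{thm:Lehmann--Goerisch} provides at least $k$ eigenvalues of $\{\lambda_i\}$ (counted with multiplicity) inside $[\rho - \rho/(1-\nu_k), \rho)$. Since $\rho \le \lambda_{m+1}$ and the sequence is nondecreasing, any $\lambda_i < \rho$ must have $i \le m$. Thus the $k$ eigenvalues inside the interval correspond to strictly increasing indices $i_1 < i_2 < \cdots < i_k$ in $\{1, \ldots, m\}$, forcing $i_1 \le m+1-k$. Monotonicity of the $\lambda_i$ then yields
\[
\lambda_{m+1-k} \ge \lambda_{i_1} \ge \rho - \frac{\rho}{1-\nu_k},
\]
which is \eqref{Lehmann--Goerisch bounds}.

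For the second claim, the only missing ingredient is $q = n$, equivalently $\nu_n < 0$. Since $B$ is positive definite (by assumption A5, compatible with Lemma \ref{lem:cond-for-positive-B}), the symmetric generalized eigenvalue problem $Az = \nu Bz$ satisfies $\nu_n = \max_{z \ne 0} z^{T} A z / z^{T} B z$, so $\nu_n < 0$ is equivalent to negative definiteness of $A$. Given a nonzero $z \in \mathbb{R}^n$, I would set $v := \sum_{i=1}^n z_i v_i$; positive definiteness of $A_1$ (which represents $N(\cdot,\cdot)$ on $\mbox{span}(v_1,\ldots,v_n)$) gives $v \ne 0$ in $D$ with $N(v,v) > 0$. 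The Rayleigh-quotient characterization
\[
\Lambda_n = \max \bigl\{ M(v,v)/N(v,v) : v \in \mbox{span}(v_1,\ldots,v_n),\; v \ne 0 \bigr\}
\]
combined with $\rho > \Lambda_n$ then yields
\[
z^{T} A z = M(v,v) - \rho N(v,v) = N(v,v)\bigl( M(v,v)/N(v,v) - \rho \bigr) < 0.
\]
Hence $\nu_n < 0$, $q = n$, and \eqref{Lehmann--Goerisch bounds-2} follows at once from \eqref{Lehmann--Goerisch bounds}.

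The argument is essentially bookkeeping once Theorem \ref{thm:Lehmann--Goerisch} is in hand; no step is substantively hard. The two places where care is needed are the multiplicity-aware pigeon-hole step (respecting the half-open endpoint $\rho$ when concluding $i \le m$), and the use of positive definiteness of $A_1$ to translate the matrix-level Rayleigh quotient back to the $M/N$ ratio over $\mbox{span}(v_1,\ldots,v_n)$. Neither presents a real obstacle.
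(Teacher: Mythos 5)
Your proof is correct. The paper states this theorem without an explicit proof, treating it as an immediate consequence of the quoted Theorem~\ref{thm:Lehmann--Goerisch}; your multiplicity-aware pigeon-hole step for \eqref{Lehmann--Goerisch bounds} (using that every eigenvalue in $[\rho-\rho/(1-\nu_k),\rho)$ has index at most $m$ since $\rho\le\lambda_{m+1}$) and your verification that $\rho>\Lambda_n$ together with the positive definiteness of $A_1$ makes $A$ negative definite, hence $\nu_n<0$ and $q=n$, supply exactly the reasoning the paper leaves implicit.
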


\medskip

We  remark that in the case that $m=n$, under  the essential condition  (see also \cite[Theorem 10.31]{nakao2019numerical_book})
\begin{equation}
\label{eq:cond_for_rho_m_eq_n}
\Lambda_n < \rho \le \lambda_{n+1}\:, 	
\end{equation}
the right-hand side of \eqref{Lehmann--Goerisch bounds-2}  
provides lower  bounds for $\lambda_1,\cdots, \lambda_n$.

\begin{remark}
\label{remark:rho_condition}
It is crucial to point out that in our proposed two-stage algorithm, the projection-based lower eigenvalue bounds in the first stage  
serve to compute $\rho$ satisfying 
\eqref{eq:condition_for_rho} or \eqref{eq:cond_for_rho_m_eq_n}.
\end{remark}

\medskip

\begin{remark} The lower bound given by Lehmann--Goerisch's theorem is monotonically increasing with respect to the {\rm a priori} lower bound $\rho~ (\le \lambda_{m+1})$ (Theorem 4.1 of \cite{Beattie+Goerisch1995}).
In practice, even with a rough $\rho$,
the eigenvalue bounds in \cref{Lehmann--Goerisch bounds} are still sharp if
$v_i$ is a good approximation to the exact eigenfunction.
\end{remark}

\subsection{Application of Lehmann--Goerisch's theorem to concrete eigenvalue problems}
    In this section, we show the application of the Lehmann--Goerisch theorem to two concrete eigenvalue problems: the Dirichlet Laplacian eigenvalue problem and the Steklov eigenvalue problem. Here, to  simplify the argument, we consider  lower bounds for the first $n$ eigenvalues. Generally, the application of Lehmann--Goerisch's theorem does not need to start from the first eigenvalue.

As a feature of Lehmann--Goerisch's method, it takes advantage of  graded meshes (i.e., meshes with refinement around possible singular parts) to recover the convergence rate of the approximate solution, when the objective solution has singularities.
 Different from the lower eigenvalue bound of Theorem \ref{thm:eigenvalue-explicit-bound} based on interpolation or projection-based error  estimation, the lower eigenvalue bound from Lehmann--Goerisch's theorem is not affected by the size of rough  elements when graded meshes are used.
 In the last section on numerical computations, an example using a graded mesh for the L-shaped domain is provided to demonstrate this feature.
 
 \medskip
	     
As a preparation for the implementation of Lehmann--Goerisch's theorem, let us introduce several FEM spaces for the currently considered second-order eigenvalue problems. Note that here the degree of a polynomial is defined by the summation of exponents for each variable. For example, $xy$ has the degree $2$. The order $p$ of the following FEM spaces will be taken as $p\ge 1$.
\begin{itemize}
\item Let $\text{CG}^p_h$ be the Lagrange FEM space consisting of continuous piecewise polynomials of degree up to $p$.
	\item Let $\text{RT}^p_h(\subset H(\Div ; \Omega))$ be the Raviart--Thomas FEM space of degree $p$:
\begin{align*}
\text{RT}^p_h := \Big\{v~:~
& v= \left(\begin{array}{c}\mspace{-10mu}c_1 \mspace{-10mu}\\ \mspace{-10mu}c_2 \mspace{-10mu}\end{array}\right)
+ c_3 \left(\begin{array}{c}\mspace{-10mu}x \mspace{-10mu}\\ \mspace{-10mu}y \mspace{-10mu}\end{array}\right) 
     \,\,\text{ on each element $K$, where the $c_i$'s} \\
    & \text{are polynomials on $K$ of degree up to $p$;} \\
& \mspace{-10mu}\,\, v\cdot\vec{n} \,\,\text{is continuous across each interior edge } e. ~\Big\}~.
\end{align*}
\item Let $\text{DG}_h^p$ be the space of piecewise polynomials with degree up to $p$. 
The property of the Raviart--Thomas space implies that
$\Div(\text{RT}_h^p) = \text{DG}_h^p$. 
\end{itemize}
\subsection{The Laplacian eigenvalue problem}

We consider the eigenvalue problem defined in \eqref{eq:laplacian-evp}, and aim to obtain high-precision eigenvalue bounds for $\lambda_1, \cdots, \lambda_n$. It is assumed that $\lambda_n$ and $\lambda_{n+1}$ are well separated from each other.
It is worth pointing out that to apply Theorem \ref{theorem:lower_bound_for_first_n_eigenvalues} for obtaining the lower bounds for the first $n$ eigenvalues, the projection-error based lower bound from the first 
stage is important and will be used in the selection of $\rho$ such that $\Lambda_n < \rho \le \lambda_{n+1}$ (see Remark \ref{remark:rho_condition}).

Introduce 
the following setting for  Lehmann--Goerisch's theorem.
$$
D:=H^1_0(\Omega); ~
M(u,v) :=(\nabla u, \nabla v)_\Omega,~
N(u,v):=(u, v)_\Omega\quad \forall u,v \in D\:.
$$

Goerisch's $XbT$ setting is taken as 
$$
X:=\left(L^2(\Omega)\right)^2; \quad 
b_G(u,v) := (u,v)_\Omega \quad \forall u,v\in X~;\quad T:=\nabla~.
$$
Take the eigenpairs $\{\lambda_i, u_i\}_{i=1}^\infty$ of \eqref{eq:laplacian-evp} for the ones 
in Assumption A2.
It is easy to confirm Assumption A3, i.e., $b_G(\nabla u, \nabla v)=M(u, v)$ for all $u,v\in {D}$.

\medskip

Now, let us describe the selection of functions required by Lehmann--Goerisch's theorem.  
 
\paragraph{Selection of $v_i$}
 Let us consider the discretized eigenvalue problem: Find $u_h(\not = 0)$ in $\text{CG}^p_h$ and 
$\lambda_h \in \mathbb{R}$ s.t.
\begin{equation}
\label{eq:eig-h-laplacian-cg-p}	
M(u_h, v_h) = \lambda_{h} N(u_h, v_h)\quad \forall v_h \in \text{CG}^p_h~.
\end{equation}
Denote the exact eigenpairs for the first $n$ eigenvalues by $\{\lambda_{i,h}, u_{i,h}\}_{i=1}^{n}$. 
The approximate eigenfunctions $v_1, \cdots, v_n$ required in A4 of Lehmann--Goerisch's theorem are selected as $v_i \approx u_{i,h}$. Note that $v_i$ does not need to be exactly equal to $u_{i,h}$, since any conforming approximation is allowed in the setting of  Lehmann--Goerisch's theorem.

\paragraph{Calculation of $w_i$}
{For each  $v_i$ $(\approx u_{i,h})$}, the constraint condition \eqref{eq:Lehmann--Goerisch-v-to-w} for $w_i \in X=(L^2(\Omega))^2$ becomes
\begin{equation}
\label{eq:LG-cond-dirichlet-v2w}
(w_i , \nabla v)= (v_i, v) \quad \forall v\in H^1_0(\Omega)\:.
\end{equation}
By the distributional definition of $\Div $, 
the condition \eqref{eq:LG-cond-dirichlet-v2w} means that $w_i$ belongs to $H(\Div;\Omega)$ and has to satisfy  \footnote{
By taking test functions $v$ from $C_0^\infty(\Omega)$ in \eqref{eq:LG-cond-dirichlet-v2w}, we can see that the distributional $\Div w_i$ is actually in $L^2(\Omega)$,
and $\Div w_i + v_{i} =0$ holds.
} 
\begin{equation}
	\label{eq:LG-cond-dirichlet-v2w-ph}
\Div w_i +v_{i}=0\quad \text{ in } \Omega~.	
\end{equation}

Since $v_{i}\in \text{CG}^p_h(\subset \text{DG}_h^p=\Div (\text{RT}^p_h))$, 
one can find a $w_i\in \text{RT}^p_h$ that solves \eqref{eq:LG-cond-dirichlet-v2w-ph}  exactly. That is, $\Div w_i +v_{i}=0$ holds pointwise.
In numerical computation, equation \eqref{eq:LG-cond-dirichlet-v2w-ph} is asserted by using test functions from $\text{DG}^p_h$, i.e., by requiring
$$
(\Div w_i + v_{i},\psi_h) =0\quad \forall \psi_h \in \text{DG}^p_h\:.
$$
The solution $w_i$ to \eqref{eq:LG-cond-dirichlet-v2w-ph} is not unique.
To have a ``good" selection of $w_i$ to obtain a tight eigenvalue bound, let us follow Remark \ref{remark:selection_of_w_i} to consider the following minimization problem:
\begin{equation}
	\label{eq:min_w_for_v_dirichlet}
\min_{w_i\in \text{RT}_h^p } \|w_i\| \quad \text{subject to } 
\Div w_i+ v_{i} = 0~.
\end{equation}
The saddle point formulation of \eqref{eq:min_w_for_v_dirichlet} is to find $(w_i,\phi_{i,h})  \in \text{RT}_h^p \times \text{DG}^p_h$ s.t.
\begin{equation}
\label{eq:discrete-equ-laplacian}
\left\{
\begin{array}{l}
(w_i, q_h) + (\phi_{i,h},\Div q_h)=0\quad \forall q_h \in \text{RT}_h^p~,\\
(\Div w_i, \psi_h) + (v_{i},\psi_h) =0\quad \forall \psi_h \in \text{DG}^p_h~.
\end{array}
\right.
\end{equation}
The system (\ref{eq:discrete-equ-laplacian}) admits a unique solution $(w_i, \phi_{i,h})$ (c.f., e.g., \cite{Arnold+F.Brezzi1985}, \S IV.1 Prop. 1.1 of \cite{Brezzi+Fortin1991}). As noted in \cref{remark:selection_of_w_i}, to have rigorous eigenvalue bounds, $w_i$ does not necessarily  have to be the exact minimizer of \cref{eq:min_w_for_v_dirichlet}, while the constraint condition \cref{eq:LG-cond-dirichlet-v2w-ph}  must hold exactly. Such a $w_i$ can be obtained as an interval function  by solving \cref{eq:discrete-equ-laplacian} with  interval arithmetic, which however requires extra computing cost compared to an approximate  solver for \cref{eq:discrete-equ-laplacian}. For an efficient way of computation of $w_i$, see Remark \ref{remark:Lehmann--Goerisch-with-shift} for the technique  to introduce a shift to the eigenvalue problem.

\paragraph{Selection of $\rho$}
~
In the first stage, we have applied  the projection-based method in \S 3 to obtain an exact lower bound $\rho$ for eigenvalue $\lambda_{n+1}$, i.e., $\rho \le \lambda_{n+1}$. 
In the case that the Crouzeix--Raviart FEM is used in the eigenvalue lower bound \eqref{eq:lower-bound-for-laplacian}, 
$\rho$ is taken as
$$
\rho:= \frac{\lambda^{\CR}_{n+1,h}}{1+C_h^2 \lambda^{\CR}_{n+1,h}}~.
$$
The matrices $A_1$, $A_1$, $A_2$, $A$ and $B$ in A5 of Lehmann--Goerisch's theorem are constructed by using the above $v_{i}$'s and $w_i$'s $(i=1,\cdots, n)$.
Let $\Lambda_n$ be the maximal eigenvalue of the  eigenvalue problem with the matrices $A_0$ and $A_1$:
\begin{equation}
\label{eq:Lambda-n-LG}	
A_0x=\Lambda A_1 x~.
\end{equation} 
With a proper selection of $n$ such that $\lambda_n$ and $\lambda_{n+1}$ are separated from each other, one can calculate $v_{i}$ using refined meshes and $\rho$ to make sure the following condition holds
$$
(\lambda_n\le )~\Lambda_{n} < \rho \le \lambda_{n+1}~.
$$
Note that the above selection of $\rho$ implies  that $B$ is  positive definite; see Lemma   \ref{lem:cond-for-positive-B}.

\medskip

To have an easy-to-follow description of Lehmann--Goerisch's theorem, let us summarize the computation procedure in Algorithm \ref{alg:Lehmann--Goerisch}.

\begin{algorithm}[h]
\caption{Guaranteed eigenvalue bounds by Lehmann--Goerisch's theorem}
\label{alg:Lehmann--Goerisch}
\begin{normalsize}

{{\bf Objective:} Lower bounds for} the first $n$ eigenvalues. It is assumed that $\lambda_n$ and $\lambda_{n+1}$ are well separated from each other so that the condition \eqref{eq:rho-in-algorithm-desc} can be satisfied.

\medskip

{\bf Procedure:}

\begin{itemize}
\item[Step 1:] {Solve the eigenvalue problem (\ref{eq:eig-h-laplacian-cg-p}) approximately to obtain $v_i (\approx u_{i,h}) \in \text{CG}^p_h $. 
Solve the eigenvalue problem 
\eqref{eq:Lambda-n-LG} rigorously to obtain $\Lambda_n$, which provides an upper bound of $\lambda_{n,h}$.}
	\item [{Step 2:}] Solve the eigenvalue problem (\ref{eq:laplacian-evp}) in $V_h^{\CR}$ { in a verified way by using interval arithmetic} to calculate $\lambda^{\CR}_{n+1,h}$. 
With refined meshes for $V_h^{\CR}$ and $\text{CG}^p_h$, obtain a $\rho$ such that
\begin{equation}
	\label{eq:rho-in-algorithm-desc}
\lambda_{n,h} \le \Lambda_n < \rho:= \frac{\lambda^{\CR}_{n+1,h}{1+C_h^2 \lambda^{\CR}_{n+1,h}}}  \le \lambda_{n+1} ~.
\end{equation}

\item [{Step 3:}]
For each $v_{i}$ ($1\le i \le n$),
let  $w_{i}$ be the solution of (\ref{eq:discrete-equ-laplacian}) that satisfies \cref{eq:LG-cond-dirichlet-v2w-ph} exactly.

\item [{Step 4:}] Define $A_0$, $A_1$, $A_2$, $A$ and $B$ with the functions
$v_{i}$, $w_{i}$ ($1\le i \le n$).
Let the eigenvalues of $Ax=\nu Bx$ be $\nu_1 \le \cdots \le \nu_{n}$. The relation  $\lambda_{n,h}  < \rho$ makes sure that $B$ and $-A$ are positive definite and hence $\nu_n<0$. The lower bound of $\lambda_k$ in (\ref{eq:laplacian-evp}) is given by
$$
\lambda_k \ge \rho - \rho/(1-\nu_{n+1-k})\quad (1\le k \le n)\:.
$$

\end{itemize}
\end{normalsize}
\end{algorithm}

\medskip

\begin{remark}
\label{remark:Lehmann--Goerisch-with-shift}
Let us introduce a well-used technique that avoids heavy computation in rigorously solving the linear system 
\eqref{eq:discrete-equ-laplacian} when seeking the $w_i$ terms.  We did not use this technique for the examples in Section 5, but refer to the technique here for future purposes. Let us consider the eigenvalue problem with a shift  $\widehat{\lambda}(>0)$, the setting of which is given by
$$
D:=H^1_0(\Omega); ~
M(u,v) :=(\nabla u, \nabla v)_\Omega+\widehat{\lambda} (u,v)_\Omega,~
N(u,v):=(u, v)_\Omega~ \forall u,v \in D\:.
$$
Goerisch's $XbT$ setting is taken as $X:=\left(L^2(\Omega)\right)^3$, $Tu:=\{\nabla u, u\}(\in X)$ for $u \in D$. 
Rewrite each $u\in X$ by $u=\{u^{(1)}, u^{(2)}\}$, $u^{(1)}\in \left(L^2(\Omega)\right)^2$, $u^{(2)} \in L^2(\Omega)$. 
For $u,v \in X$, 
$$
b_G({u},{v}) := ({ u^{(1)}},{v^{(1)}})_\Omega  + \widehat{\lambda} ({ u^{(2)}},{v^{(2)}})_\Omega ~.
$$
For each approximate eigenfunction $v_i$, the condition \cref{eq:Lehmann--Goerisch-v-to-w} for $w_i=\{w_i^{(1)},w_i^{(2)}\}\in X$  becomes 
\begin{equation}\label{eq:lg_new_constraint_cond_under_shift}
\Div w_i^{(1)} + \widehat{\lambda}  w_i^{(2)} + v_i=0~\text{ in } \Omega.
\end{equation}
For given $v_i\approx u_{i,h}$, calculate ${w}^{(1)}_i \in \text{RT}_h^{\:p}$ by solving 
\cref{eq:discrete-equ-laplacian} with an approximate  solver. 
Then, calculate $w_i^{(2)}$ as $w_i^{(2)} = (- \Div({w}^{(1)}_i ) - v_{i})/\:\widehat{\lambda}$ in a rigorous way. The resulting $w_i=\{w_i^{(1)},w_i^{(2)}\}$ satisfies the condition \cref{eq:lg_new_constraint_cond_under_shift} exactly. By applying Lehmann--Goerisch's theorem to the current setting, one can directly obtain lower bounds for the shifted eigenvalues, and thus bounds for the original eigenvalues after re-subtracting the shift $\widehat{\lambda}$.

\end{remark} 


\subsection{The Steklov eigenvalue problem}
\label{subsec:Lehmann--Goerisch-setting}
In this subsection, we consider high-precision bounds for the first $n$ eigenvalues for the Steklov eigenvalue problem.
Let us take the following setting for the Lehmann--Goerisch theorem.
\begin{enumerate}
\item [(a)] Let $D = H^1(\Omega)$ and define the bilinear forms
$M(\cdot, \cdot)$ and $N(\cdot, \cdot)$ by
\begin{equation*}
	M(u,v):= \int_\Omega \nabla u \cdot  \nabla v + uv \: \ud x,\quad
N(u,v) := \int_{\partial \Omega} \gamma u \gamma v \:\ud s~~ \forall u,v\in {D}~.
\end{equation*}
Here, $\gamma$ is the trace operator.

	\item [(b)] Let  $\{\lambda_i, u_i\}_{i \in \mathbb{N}}$  be the eigenpairs of the eigenvalue problem  \eqref{eq:eig-problem-with-lambda}. 
   Note that $\{u_i\}_{i \in \mathbb{N}}$ is a complete orthonormal system of $\mbox{Ker}(\mathcal{K})^\perp (\subset H^1(\Omega))$,   where the orthogonal complement is taken with respect to  $M(\cdot, \cdot)$. 
    Given $v\in  D$,  decompose $v$ by $v=v_0+v'$, $v_0 \in H_0^1(\Omega)$,
$v' \in \mbox{Ker}(\mathcal{K})^\perp$ and 
\begin{equation}
\label{eq:local_v_prime_by_ui}
v'=\sum_{i=1}^{\infty}  M(v',u_i) u_i.\end{equation}
Since $v_0 \in H_0^1(\Omega)=\mbox{Ker}(\mathcal{K})$, we have $ M(u_i, v_0)= \lambda_i N(u_i,v_0)=0$. 
Hence,
$$
 M(u_i, v) =
 M(u_i, v') =\lambda_i N(u_i, v')
 =\lambda_i N(u_i, v).
$$
It is clear that $M(u_i,u_j )=\delta_{ij}$ and $N(u_i,u_j )=1/\lambda_i\delta_{ij}$ ($\delta_{ij}$: Kronecker delta). 
Therefore,  using \eqref{eq:local_v_prime_by_ui} and the continuity of $N(\cdot, \cdot)$ with respect to $M(\cdot,\cdot)$,
\begin{align}
	N(v,v)  &= N(v',v') =  \sum_{i=1}^{\infty} N(u_i, u_i ) \cdot  |M(v',u_i)|^2 & \notag \\
&=\sum_{i=1}^\infty \frac{1}{\lambda_i} \cdot \lambda_i^2  |N(v',u_i)|^2 
=\sum_{i=1}^\infty \lambda_i  |N(v',u_i)|^2 = \sum_{i=1}^\infty \lambda_i  |N(v,u_i)|^2 \:.	&
\notag 
\end{align}
With the above argument,  we have confirmed that  $\{\varphi_i\}_{i \in \mathbb{N}}$ in Assumption A2 can be taken as $\{u_i\}_{i \in \mathbb{N}}$.

\item [(c)] Let ${X} = \big(L^2(\Omega)\big)^3$ and define $b_G(\cdot, \cdot)$ 
for	$\phi, \psi \in {X}$ by
$
b_G(\phi,\psi) := \int_\Omega \phi \cdot \psi \, dx \:.
$
Let $Tv := \{\nabla v,v\}$ for $v\in{D}$. Hence, $b_G(Tu,Tv)=M(u,v)$ for all $u,v\in {D}$.

\end{enumerate}

\medskip

Now, let us describe the function  required by  Lehmann--Goerisch's theorem.

\paragraph{Selection of $v_i$}
Let us consider the discretized eigenvalue problem: 
\begin{equation}
M(u_h, v_h) = \lambda_{h} N(u_h, v_h)\quad \forall v_h \in \text{CG}^p_h~.
\end{equation}
Or, conversely, 
\begin{equation}	\label{eq:steklov_eig_h}
N(u_h, v_h) = \mu_{h} M(u_h, v_h)\quad \forall v_h \in \text{CG}^p_h~.
\end{equation}
Take $\mu_{i,h}$ ($i=1,\cdots,n$) as the $n$ largest positive eigenvalues of \eqref{eq:steklov_eig_h} and $u_{i,h}\in \mbox{Ker}(\mathcal{K})^\perp$  ($i=1,\cdots,n$) as the corresponding eigenfunctions. Then, define $\lambda_{i,h}:=1/\mu_{i,h}$ $(i=1,\cdots,n)$.
The functions $v_1, \cdots, v_n$ required by A4 of Lehmann--Goerisch's theorem are selected  in $\text{CG}^p_h$ as approximations  to $u_{1,h}, \cdots, u_{n,h}$. 

\medskip

\paragraph{Calculation of $w_i$}
For each $v_i\approx u_{i,h}$, the equation \cref{eq:Lehmann--Goerisch-v-to-w} to determine ${w}_i=\{w_{i}^{(1)},w_{i}^{(2)}\} \in  X$, $w_i^{(1)} \in (L^2(\Omega))^2$, $w_i^{(2)} \in L^2(\Omega)$ becomes
\begin{equation}\label{sol-operator-G-steklov}
( w_{i}^{(1)}, \nabla v)_{\Omega}
+( w_{i}^{(2)}, v)_{\Omega} = (\gamma v_{i}, \gamma  v)_{\partial\Omega}\quad\forall v \in {D}.
\end{equation}
It is easy to see that $w_{i}^{(1)}$ must belong to $H(\Div ; \Omega)$ and satisfy 
\begin{equation}
\label{eq:beta-equation-bdc}
  - \Div w_{i}^{(1)} + w_{i}^{(2)} =0 \text{ in } \Omega, \quad w_{i}^{(1)}  \cdot \vec{n} = \gamma v_{i} \text{ on } {\partial\Omega}.	
\end{equation}
Therefore, $w_{i}^{(2)}:=\Div w_{i}^{(1)}$ has to be chosen, whence 
$$
b_G(w_i, w_i) =\|w_{i}^{(1)}\|^2_{\Omega} + \|\Div  w_{i}^{(1)}\|^2_{\Omega}~.
$$
Since 
$\{(w_h\cdot \vec{n})|_{\partial\Omega} \::\: w_h\in {RT}^{p}_h\}$ contains all piecewise polynomials of degree $p$ on boundary edges of $\mathcal{T}^h$, 
$w_{i}^{(1)}$ can be selected from ${RT}^{p}_h$ such that the second equation of \cref{eq:beta-equation-bdc} holds.  
As suggested in Remark \ref{remark:selection_of_w_i}, the $w_i$ term is determined by approximately solving the following  minimization problem: 
\begin{equation}
\label{eq:sol-operator-G-steklov-minimize}
\min_{\substack{w_{i}^{(1)} \in{RT}^{p}_h,~  (w_{i}^{(1)}\cdot \vec{n})|_{\partial\Omega} = \gamma v_{i}}}
\|w_{i}^{(1)}\|^2_{\Omega} + \|\Div  w_{i}^{(1)}\|^2_{\Omega} ~.
\end{equation}
The minimization problem \cref{eq:sol-operator-G-steklov-minimize} is
equivalent to the following variational problem: Find $w_{i}^{(1)} \in{RT}^{p}_h$ such that $w_{i}^{(1)}  \cdot \vec{n} = \gamma v_{i} \text{ on } {\partial\Omega}$, and
\begin{equation}
(w_{i}^{(1)} ,w_h)_{\Omega}+(\Div \,w_{i}^{(1)}, \Div \, w_h)_{\Omega} = 0\quad \forall w_h\in \text{RT}^p_{0,h}.
\end{equation}
Here, $\text{RT}^p_{0,h}:=\{ w_h \in \text{RT}^p_h ~|~  w_h\cdot \vec{n}=0 \text{ on } {\partial \Omega} \}$.

\medskip

\paragraph{Selection of $\rho$}~

Take $\rho$ as the lower eigenvalue bound of $\lambda_{n+1}$ obtained by 
 the Crouzeix--Raviart finite element method. That is,
 $$
\rho:= \frac{\lambda^{\CR}_{n+1,h}}{1+C_h^2 \lambda^{\CR}_{n+1,h}}.
 $$
Define matrices $A_0, A_1$ by
\begin{equation}
	\label{eq:LG-method-matrix-A0-A1-steklov}
A_0 := \big( M(v_{i}, v_{j}) \big)_{i,j=1}^n~,
\quad 
A_1 := \big( N( v_{i},  v_{j}) \big)_{i,j=1}^n .
\end{equation}
Note that $A_1$ is positive definite according to the choice 
 of $v_1, \cdots, v_n$.
Let $\Lambda_n$ be the maximal eigenvalue of the eigenvalue problem: $A_0 x = \Lambda A_1 x$.
With a proper selection of the 	eigenvalue index $n$ and a refined mesh, we can ensure $\Lambda_n < \rho$.

\paragraph{Matrices for the Lehmann--Goerisch theorem }
Let ${\nu_k}$ $(k=1,\cdots,n)$ be the negative eigenvalues of $Ax=\nu Bx$, where
\begin{equation}
	\label{eq:LG-method-matrix-steklov}
A_2 = \big( b_G(w_i, w_j) \big)_{i,j=1}^n, \quad
A = A_0 - \rho A_1, \quad B = A_0 - 2 \rho A_1 + \rho^2 A_2.
\end{equation}
Then we obtain the following lower bounds.
\begin{equation}\label{Lehmann--Goerisch-bounds-h}
\lambda_{k} \geq \rho - \frac{\rho}{1 - {\nu}_{n+1-k}}  \quad (1\leq k\leq n).
\end{equation}

\begin{remark}
\label{remark:steklov-LM-method-new-setting}
 Instead of taking $D=H^1(\Omega)$ as discussed in this section, we can also obtain the same eigenvalue bounds by taking 
$ {D} = \mbox{Ker}(\mathcal{K})^\perp(\subset H^1(\Omega))$ while
keeping the setting of $M,N,T,X$ the same as the one in (a) (b), and (c) in this section. 
Since the approximate eigenvector $v_{i} \in \text{CG}^p_h$ may not belong to $D=\mbox{Ker}(\mathcal{K})^\perp$, more efforts are needed to handle this issue.
Introduce the orthogonal projection $P:H^1(\Omega) \mapsto {\mbox{Ker}(\mathcal{K})^\perp}$ with respect to $M(\cdot,\cdot)$ and decompose $v_i$ by 
$$
v_i = P v_i + (I-P) v_i;\quad 
P v_i\in \mbox{Ker}(\mathcal{K})^\perp,~
(I-P) v_i \in \mbox{Ker}(\mathcal{K})~.
$$
Let $\tilde{v}_i := P v_i$.
Actually, we apply the Lehmann--Goerisch theorem with $v_i$ replaced by $\tilde{v}_i$ ($i=1,\cdots,n$). 
Note that  $\tilde{v}_i $ is just for theoretical analysis and does  not need to be computed explicitly.
Since $v_i-\tilde{v}_i \in \mbox{Ker}(\mathcal{K})$, we have $\gamma (v_i-\tilde{v}_i)=0$ on $\partial \Omega$. Hence, $\gamma v_i = \gamma \tilde{v}_i $. Thus, the $w_i$ term resulting from $\tilde{v}_i$ is the same as the one for $v_i$. In the construction of the matrices $A_0$, $A_1$, $A_2$, rather than 
$A_0$ using $v_i$ in \eqref{eq:LG-method-matrix-A0-A1-steklov}, we have $
\widetilde{A}_0 := \big(M( \tilde{u}_{i}, \tilde{u}_{j}) \big)_{i,j=1}^n$. From the orthogonality of the projection $P$ with respect to $M(\cdot,\cdot)$, we have $M(u,u)  \ge M(Pu, Pu) $ for all $u \in H^1(\Omega)$. Therefore, $A_0 - \widetilde{A}_0$ is positive semidefinite. 

Lehmann--Goerisch's theorem utilizes the following eigenvalue problem: 
$$
 \widetilde{A} x = \tilde{\nu} \widetilde{B} x \quad  (\widetilde{A} := \widetilde{A}_0 - \rho A_1, ~ \widetilde{B} := \widetilde{A}_0 - 2 \rho A_1 + \rho^2 A_2)~.
$$
Let $\widetilde{\Lambda}_n$ be the maximal eigenvalue of 
$\widetilde{A}_0 x = \widetilde{\Lambda} A_1 x$. Then
$\widetilde{\Lambda}_n \le \Lambda_n$.
By taking $(\widetilde{\Lambda}_n \le ) \: \: \Lambda_n <\rho \le \lambda_{n+1}$, 
we have that $-\widetilde{A}$ is positive definite. 
From the positive definiteness  of $\widetilde{B}$ given by Lemma \ref{lem:cond-for-positive-B}, we have that $\widetilde{B}-\widetilde{A} = \rho(\rho A_2-A_1)$ is positive definite as well.
With a proper transformation, the eigenvalue problems to determine $\tilde{\nu}$ and $\nu$ (all of which are negative) are given by
$$
(\widetilde{A}_0 - \rho A_1)x = \frac{\tilde{\nu}}{1-\tilde{\nu}} (\rho A_2 - A_1)x, \quad ({A}_0 - \rho A_1)x = \frac{{\nu}}{1-{\nu}} (\rho A_2 - A_1)x\:. 
$$
From the min-max principle for eigenvalues, we have 
$\tilde{\nu}_k \le \nu_k$ ($k=1,\cdots, n$).
Thus, we obtain the same lower bound as \cref{Lehmann--Goerisch-bounds-h}  via the  theoretical term $\tilde{\nu}_i$. That is, 
\begin{equation}\label{eq:steklov-Lehmann--Goerisch-bounds-h-from-new-setting}
\lambda_{k} \geq \rho - \frac{\rho}{1 - \tilde{\nu}_{n+1-k}} \ge \rho - \frac{\rho}{1 - {\nu}_{n+1-k}}  \quad (1\leq k\leq n)\:.
\end{equation}
	
\end{remark}

\section{Computation Results}

In this section, the Laplacian and the Steklov eigenvalue problems are considered. 
For each problem, high-precision eigenvalue bounds are obtained in two steps by applying the projection-based  eigenvalue bounds in \cref{thm:eigenvalue-explicit-bound} and Lehmann--Goerisch's  \cref{thm:Lehmann--Goerisch}. The features of the proposed algorithm are illustrated by the numerical results: the adaptivity to non-uniform meshes or graded meshes is shown in \S \ref{subsec:numerical-results-laplacian}; high-precision eigenvalue bounds and the recovery of the convergence rate for the bounds to Steklov eigenvalues are demonstrated in \S \ref{subsec:numerical-results-steklov}.

To obtain rigorous bounds for eigenvalues, we use interval arithmetic and the method based on Sylvester's law of inertia \cite{Behnke-1991} to exactly solve eigenvalue problems for matrices. 

\subsection{Dirichlet Laplacian eigenvalues}
\label{subsec:numerical-results-laplacian}

Let us consider the Dirichlet Laplacian eigenvalue problem on a dumbbell domain $\Omega$, which consists of two unit squares connected by a bar with a width of $0.40625$ (see \cref{fig:dumbbell_mesh}). The approximate values of the eigenvalues are as follows:
$$
\lambda_1 \approx 19.515307,\quad
\lambda_2 \approx 19.515448,\quad
\lambda_3 \approx 47.510661, \quad 
\lambda_4 \approx 47.519450,
$$
$$
\lambda_5 \approx 49.342953,\quad
\lambda_6 \approx 49.342955\;.
$$

The approximate eigenfunctions corresponding to these six eigenvalues are plotted in Fig. \ref{fig:dumbbell_6_eig_function} in row-wise order.

\begin{figure}[hp]
\includegraphics[width=13cm]{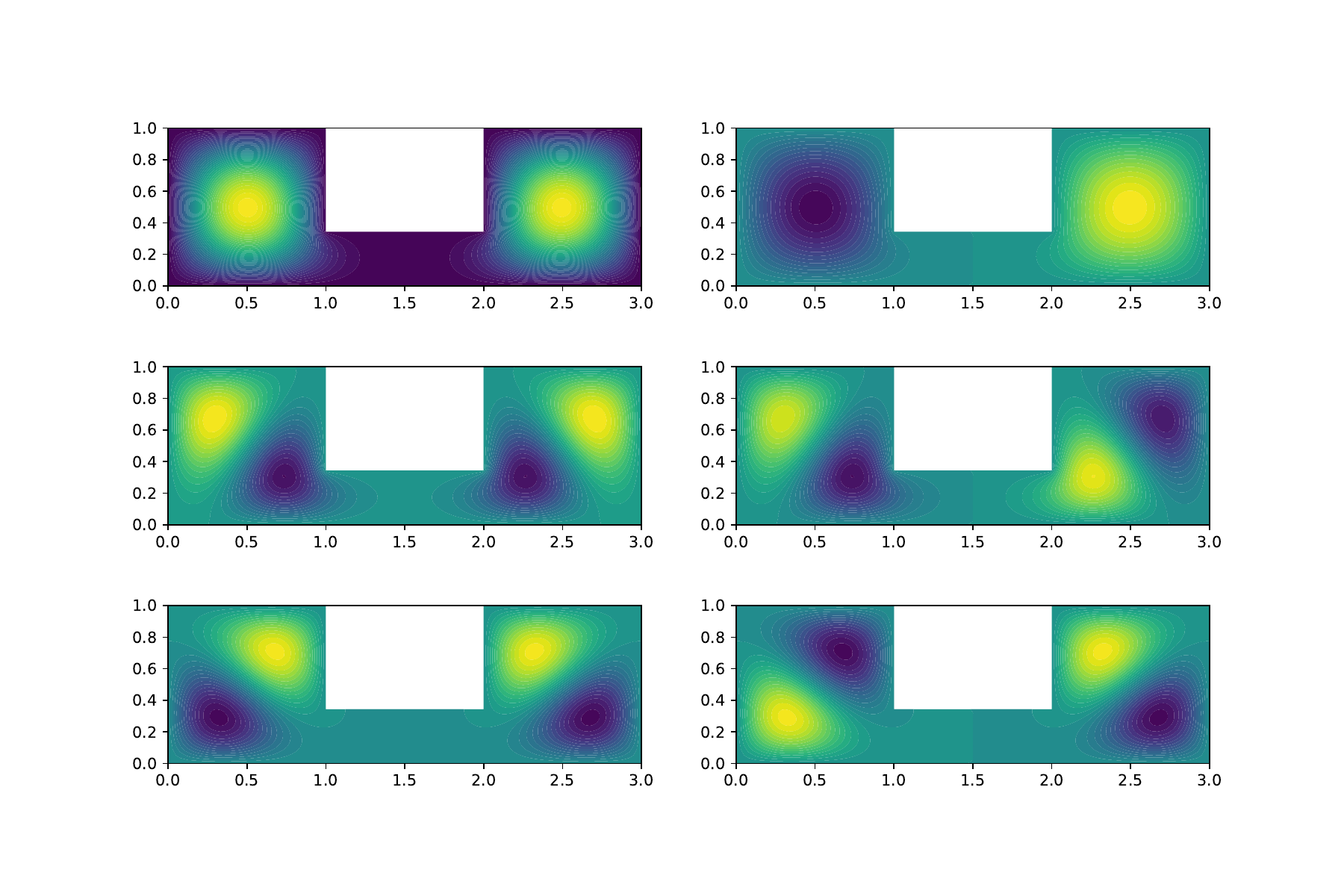} 
\caption{Approximate eigenfunctions for $\lambda_i$'s $(1\le i \le 6)$ (dumbbell domain) \label{fig:dumbbell_6_eig_function}}
\end{figure}

\begin{figure}[ht]
\begin{center}	
\includegraphics[width=10cm]{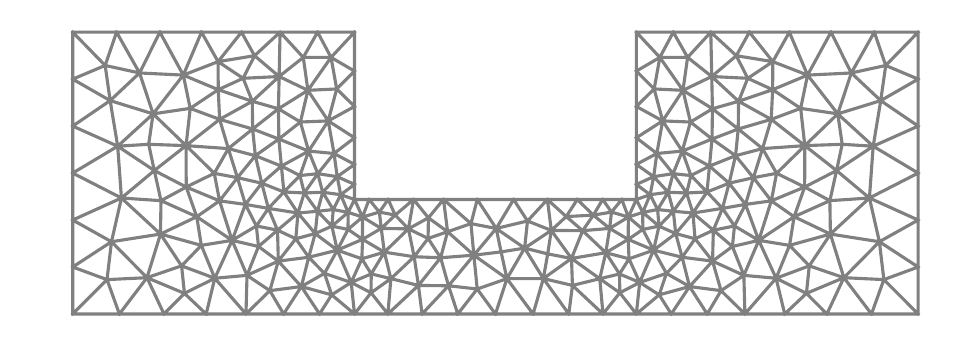} 
\end{center}
\caption{Graded mesh for the dumbbell domain \label{fig:dumbbell_mesh}}	
\end{figure}

A graded mesh is employed for the finite element computations. In accordance with the regularity of the solution, the  size of triangular elements  is chosen such that
\[
h(r) \approx 
\begin{cases}
h_{\max} \: r^{1/3}, & \text{for } r < 1, \\
h_{\max}, & \text{for } r \ge 1,
\end{cases}
\]
where \( r \) denotes the distance from an element to the two re-entrant corners of the domain, and \( h_{\max} \) is the maximum edge length of the mesh.

For a  mesh with \(h_{\max}=0.0298\), we employ the Crouzeix–Raviart FEM to compute a rigorous lower bound for \(\lambda_7\) via the projection error constant $C_h=0.1893h_{\max}$ and \cref{thm:eigenvalue-explicit-bound}:
\[
C_h \le 0.00565, \quad
\lambda_{7,h}^{\CR} \in [74.37604,74.37605 ], \quad
\lambda_7 \ge 74.19986.
\]
Thus,  for  the Lehmann--Goerisch method, the lower bound $\rho$ of \( \lambda_7 \) is set as 
$$
\rho = 74.0 <  \lambda_7.
$$
Here we deliberately take \(\rho\) well below the  lower bound for \(\lambda_7\) based on $\lambda_{7,h}^{\CR}$ to show that even a coarse estimate of \(\lambda_7\) still yields very sharp bounds for $\lambda_1, \cdots, \lambda_6$.

The rigorous lower  and upper bounds with different algorithms are displayed in Table \ref{table:bounds_laplacian_dumbbell}.
In this computation, we employ the Crouzeix--Raviart finite element method (abbreviated as \texttt{CR}), the Lagrange finite element method (\texttt{CG}), and the Lehmann--Goerisch method (\texttt{LG}) to compute these lower and upper eigenvalue bounds  shown in the table.
A sample mesh with maximum edge length (i.e., mesh size) $h_{\max} = 0.194$ is shown in Fig.~\ref{fig:dumbbell_mesh}.
By using the numerical evaluation of eigenvalues     obtained on a highly refined mesh as reference eigenvalues (denoted by $\widetilde{\lambda}_i$), we compute the errors of the lower and upper bounds, as shown in Table~\ref{table:dumbbell_eig_bounds}.

It is important to point out that the projection-based lower bounds $\lambda_i^{\CR}$ obtained in the first stage are far too inaccurate to separate $\lambda_3$ and $\lambda_4$.
 While $\lambda_1$ and $\lambda_2$ are known to be distinct, it is not a priori known whether $\lambda_3$ and $\lambda_4$ are different. 
 Here, we utilize a quite refined mesh with 1654 elements, for which the  conforming space $\text{CG}_h^3 $ has DOF=30044,  to rigorously confirm their separation by computing validated eigenvalue bounds. By applying  Lehmann--Goerisch's theorem to the cluster of eigenvalues  $\lambda_3, \cdots, \lambda_6$ and the lower bound $\rho=74\le \lambda_7$, we obtain
 \begin{equation}
\label{eq:separate_3_and_4}
\lambda_3 \le 45.6099 < 45.6495 \le \lambda_4\, . 
\end{equation}

Note that, even with a configuration that successfully separates $\lambda_3$ and $\lambda_4$, the results are still too coarse to confirm the separation of $\lambda_5$ and $\lambda_6$. Additional effort is needed to distinguish these eigenvalues (if they are different, which is not known). The shifting technique in Remark \ref{remark:Lehmann--Goerisch-with-shift} may offer a more efficient approach, as it avoids the need to rigorously solve the large linear system in \eqref{eq:discrete-equ-laplacian}.

\begin{table}
\renewcommand{\arraystretch}{1.2}
\begin{center}
\caption{\label{table:bounds_laplacian_dumbbell} Lower and upper bounds for Laplacian eigenvalues (dumbbell domain, $h_{\max} =0.290$, number of elements=177). }
	\begin{tabular}{|c|c|c|c|c|c|}
\hline
 \multicolumn{1}{|c|}{\quad} & \multicolumn{3}{|c|}{\rule[-0.2cm]{0.cm}{0.6cm}
Lower bound: $\underline{\lambda}_i$} & \multicolumn{2}{|c|}{\rule[-0.2cm]{0.cm}{0.6cm}
Upper bound: $\overline{\lambda}_i$}\\
\cline{2-6}
$\lambda_i$	 & \texttt{CR} & \texttt{LG}(p=1) &\texttt{LG}(p=2) &~ \texttt{CG}(p=1) ~ &\texttt{CG}(p=2) \\
\hline
$\lambda_1$ &  $18.5595$ & $19.2865$ & $19.5080$ & $20.1365$ & $19.5265$ \\ \hline 
$\lambda_2$ &  $18.5596$ & $19.2867$ & $19.5082$ & $20.1366$ & $19.5266$ \\ \hline 
$\lambda_3$ &  $41.9488$ & $41.1170$ & $47.2888$ & $51.0975$ & $47.6105$ \\ \hline 
$\lambda_4$ &  $41.9554$ & $41.1189$ & $47.2978$ & $51.1087$ & $47.6192$ \\ \hline 
$\lambda_5$ &  $43.7222$ & $41.5923$ & $49.2050$ & $53.3572$ & $49.4143$ \\ \hline 
$\lambda_6$ &  $43.7222$ & $41.5926$ & $49.2050$ & $53.3572$ & $49.4143$ \\ \hline \end{tabular}
\end{center}
\end{table}

\begin{table}[pt]
\caption{Errors of eigenvalue bounds shown in Table \ref{table:bounds_laplacian_dumbbell} (dumbbell domain, w.r.t. $\widetilde{\lambda}_i$)\label{table:dumbbell_eig_bounds}}
\begin{center}
\renewcommand{\arraystretch}{1.1}
\begin{tabular}{|c|c|c|c|c|c|}
\hline
 \multicolumn{1}{|c|}{\quad} & \multicolumn{3}{|c|}{\rule[-0.2cm]{0.cm}{0.6cm}
Error of lower bound: $\widetilde{\lambda}_i-\underline{\lambda}_i$} & \multicolumn{2}{|c|}{\rule[-0.2cm]{0.cm}{0.6cm}
Error of upper bound: $\overline{\lambda}_i-\widetilde{\lambda}_i$}\\
\cline{2-6}
	 & \texttt{CR} & \texttt{LG}(p=1) &\texttt{LG}(p=2) &~ \texttt{CG}(p=1) ~ &\texttt{CG}(p=2) \\
\hline
$\lambda_1$, $\lambda_2$ & \texttt{4.9e-02}&\texttt{1.2e-02}&\texttt{3.7e-04}&\texttt{3.2e-02}&\texttt{5.7e-04}\\ \hline 
$\lambda_3$, $\lambda_4$ & \texttt{1.2e-01}&\texttt{1.3e-01}&\texttt{4.7e-03}&\texttt{7.5e-02}&\texttt{2.1e-03}\\ \hline 
$\lambda_5$, $\lambda_6$  & \texttt{1.1e-01}&\texttt{1.6e-01}&\texttt{2.8e-03}&\texttt{8.1e-02}&\texttt{1.4e-03}\\ \hline 
\end{tabular}
\end{center}
\end{table}

\begin{figure}[ht]
\begin{center}
\includegraphics[width=8cm]{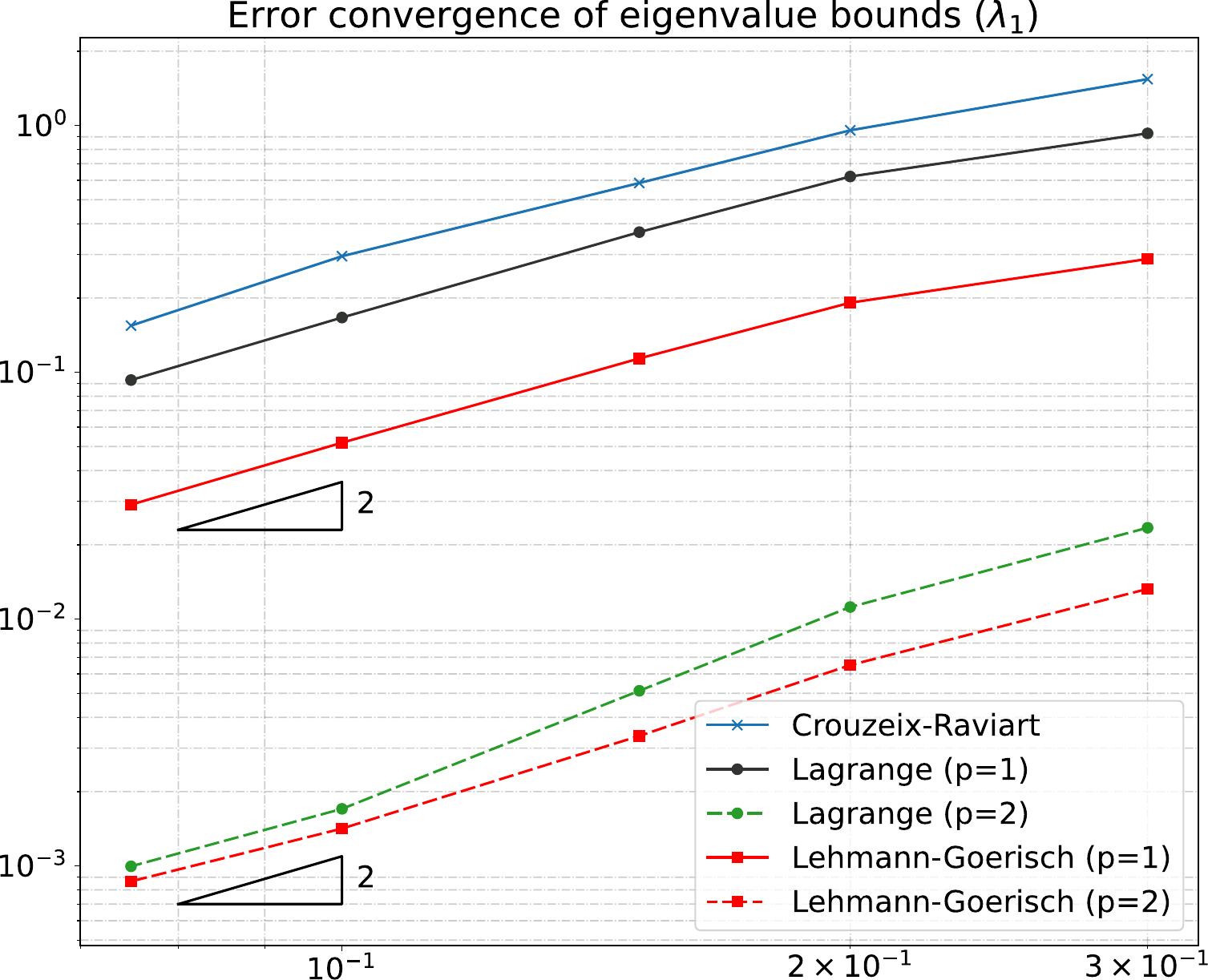}\\
\vskip 0.1cm
\includegraphics[width=8cm]{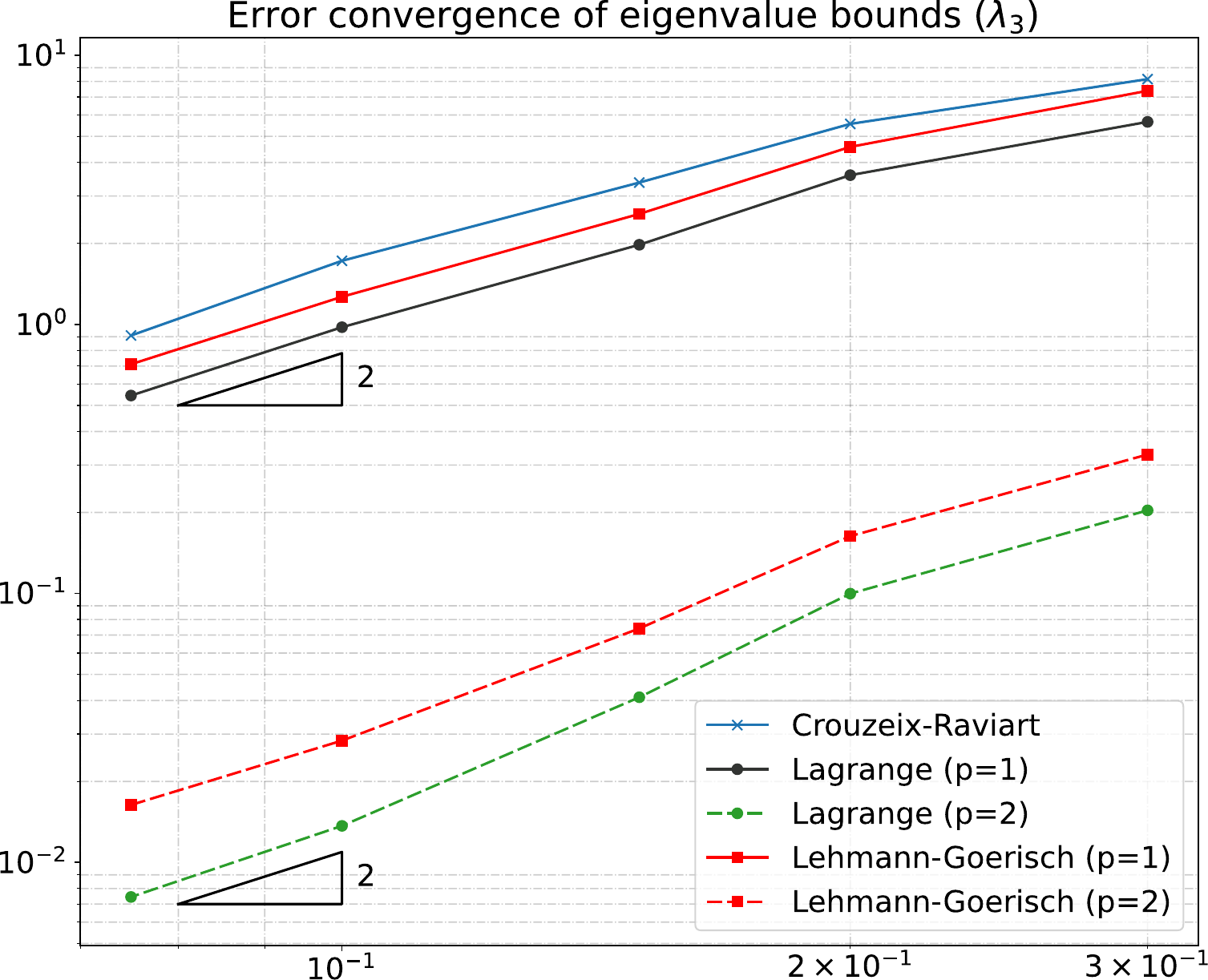} \\
\vskip 0.1cm
\includegraphics[width=8cm]{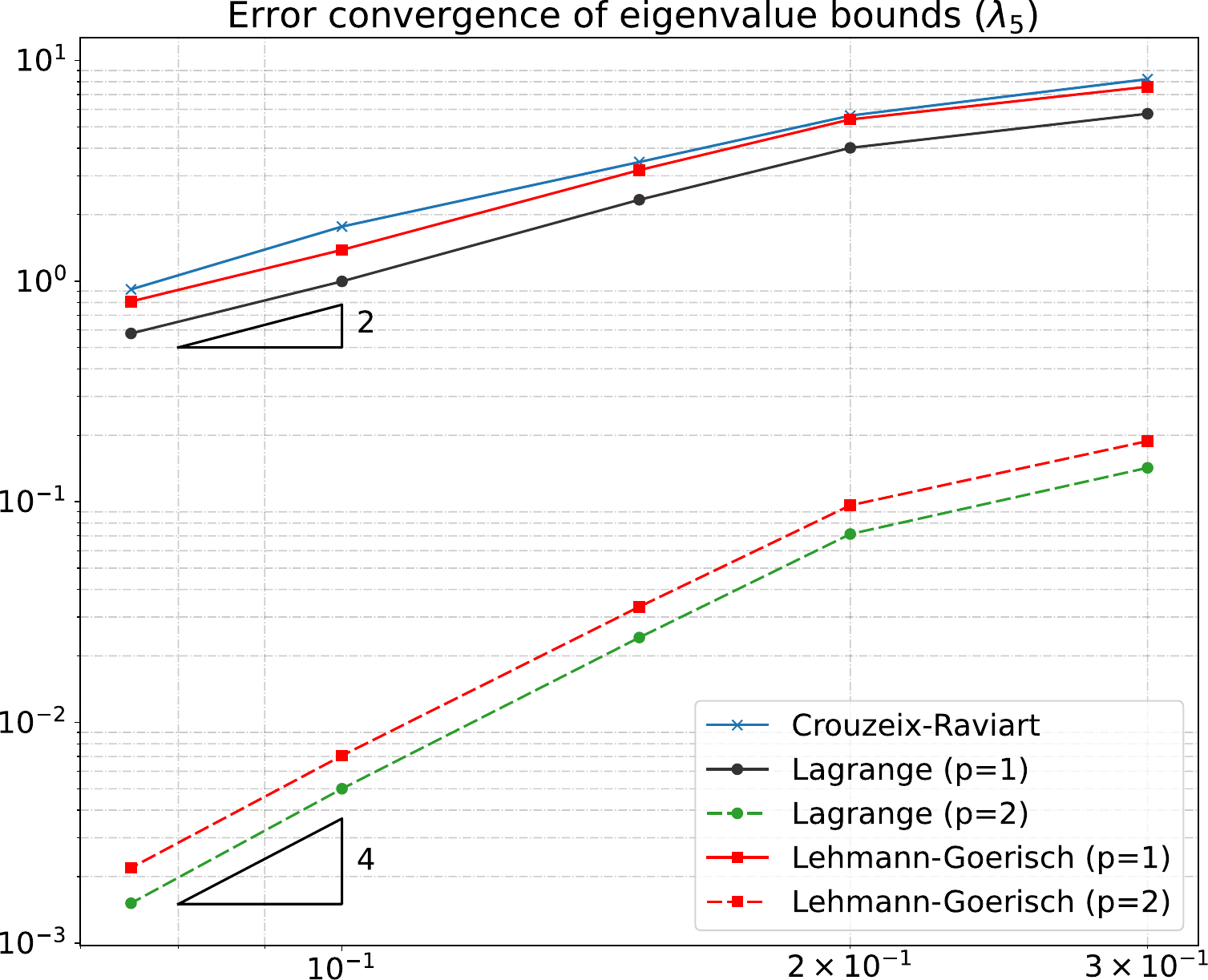}
\end{center}
\caption{Convergence of Laplacian eigenvalue bounds for dumbbell domain ($x$-axis: mesh size $h_{\max}$; $y$-axis: magnitude of the error). \label{fig:convergence-dumbbell-laplacian}}
\end{figure}

 \medskip

For recursively refined meshes,  the error convergence of various bounds for the eigenvalues is shown in Fig. \ref{fig:convergence-dumbbell-laplacian}.
From the numerical results, the following features of the proposed algorithm are observed.

\begin{enumerate}
\item For the eigenfunctions corresponding to $\lambda_1, \ldots, \lambda_4$, which may lack full $H^2$-regularity, the use of a graded mesh enables the associated eigenvalue bounds computed with linear finite elements to achieve the optimal convergence rate of $O(h^2)$.
\item When the eigenfunctions possess higher regularity (as one might guess for the eigenfunctions corresponding to $\lambda_5$ and $\lambda_6$ based on the numerical plots in Fig. \ref{fig:dumbbell_6_eig_function}), the lower bounds obtained via the Lehmann--Goerisch method and the upper bounds from the Lagrange finite element method exhibit optimal convergence rates of $O(h^2)$ for $p=1$ and $O(h^4)$ for $p=2$.

\item The Lehmann--Goerisch method using the conforming finite element method with polynomial degree $p=2$ (i.e., $\text{CG}^2_h$) yields significantly sharper lower bounds for the eigenvalues compared to those obtained using the Crouzeix--Raviart finite element method.

\end{enumerate}

\subsection{Steklov eigenvalues}
\label{subsec:numerical-results-steklov}

In this section we consider two model Steklov eigenvalue problems: one on the unit square
\[
\Omega = (0,1)\times(0,1),
\]
and one on the L-shaped domain
\[
\Omega = (0,2)\times(0,2)\setminus [1,2)\times[1,2).
\]
For each example we obtain a rough lower bound from \cref{thm:eigenvalue-explicit-bound} and a high-precision lower bound via Lehmann--Goerisch’s theorem.

\begin{figure}[p]
\begin{center}
\includegraphics[width=4cm]{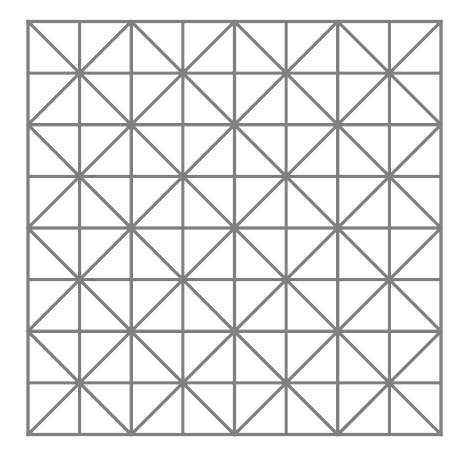} \quad \includegraphics[width=4cm,height=4cm]{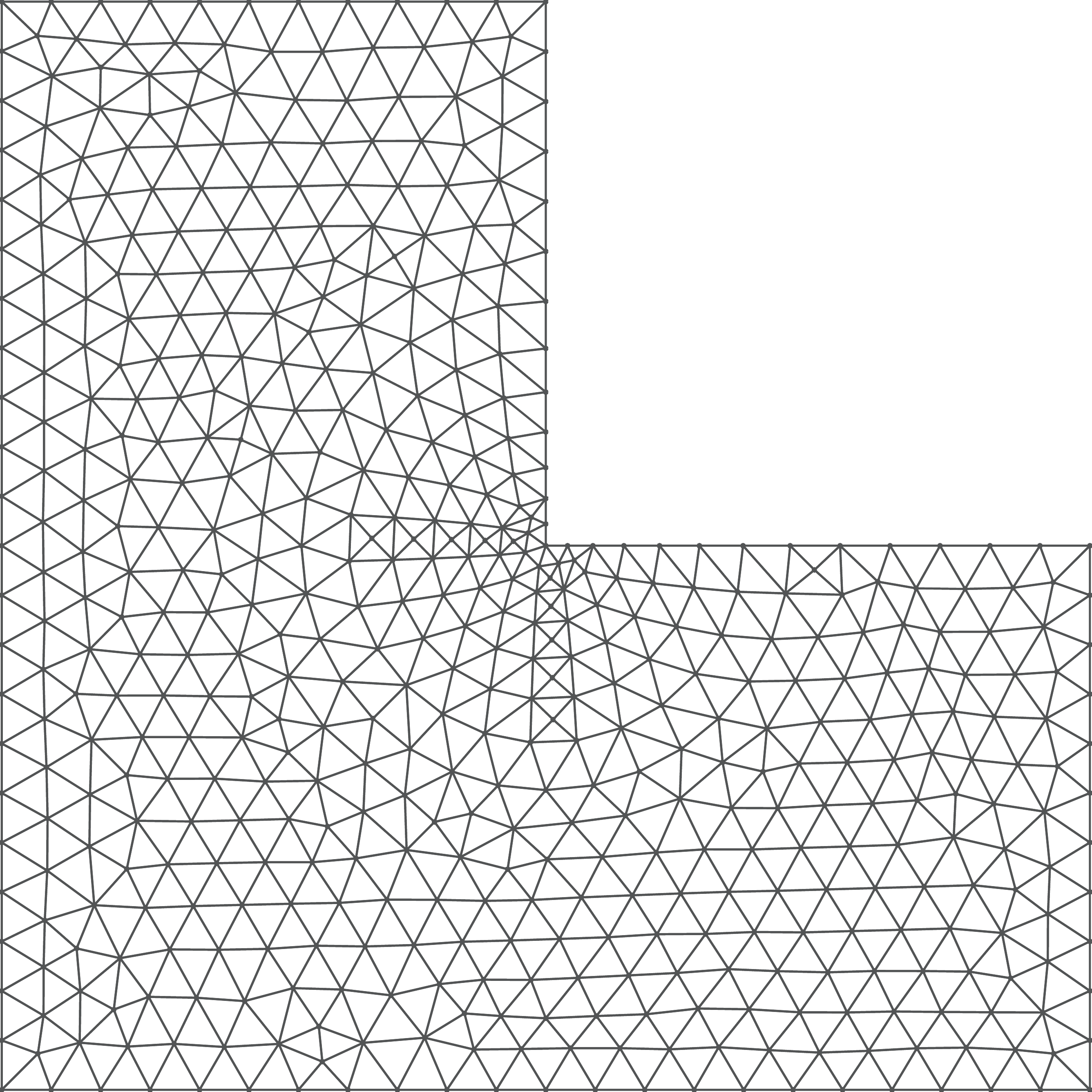}
\caption{Uniform mesh for the square domain and 
graded mesh for the L-shaped domain  \label{fig:mesh_steklov}}	
\end{center}
\end{figure}

\paragraph{\bf Square domain}
By separation of variables, this eigenvalue problem can be solved in a closed form, up to the solution of a system of two transcendental equations with two unknowns. These closed-form solutions show that $\lambda_2=\lambda_3$.

\begin{figure}[p]
\begin{center}
\includegraphics[width=3cm]{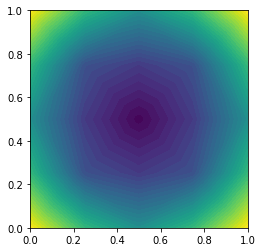}
\includegraphics[width=3cm]{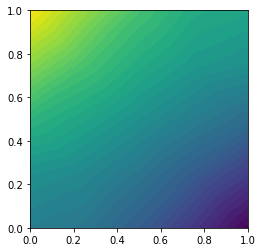}
\includegraphics[width=3cm]{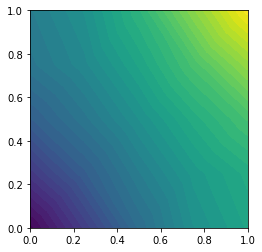}
\includegraphics[width=3cm]{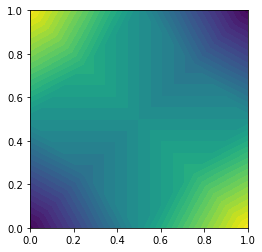}
\end{center}
\caption{Approximate eigenfunctions for Steklov eigenvalues $\lambda_i (1\le i \le 4)$ (square domain).\label{fig:eig-func-square-steklov}}
\end{figure}

We triangulate \(\Omega\) uniformly with an \(N\times N\) grid; see \cref{fig:mesh_steklov} for \(N=8\).  The mesh size \(h_{\max}\) as the maximum edge length is \(h_{\max}=\sqrt{2}/8\) when \(N=8\).
Using conforming  FEM with $\text{CG}^2_h$ and $N=32$,   
we obtain numerical values for the first four Steklov eigenvalues (see also, e.g.,  \cite{YANG20092388}) :
\[
\lambda_1\approx 0.24007909,\quad
\lambda_2 = \lambda_3\approx 1.49230326,\quad
\lambda_4\approx 2.08264706\,.
\]
The eigenfunctions for these 4 eigenvalues are plotted in \cref{fig:eig-func-square-steklov}.

For a mesh with \(N=64\), we employ the Crouzeix–Raviart FEM to compute a lower bound for \(\lambda_4\) via the projection error constant in \cref{eq:C_h_definition_steklov} and \cref{thm:eigenvalue-explicit-bound}:
\[
C_h \le 0.12717,\quad
\lambda^{\CR}_{4,h} \in [2.08216,2.08217], \quad 
\lambda_4 \ge 2.01433.
\]
Hence, in applying Lehmann–Goerisch’s theorem we take $n=3$ and 
\begin{equation}
	\label{eq:lg-setting-steklov}
\lambda_3 < 1.49246 \le \rho = 2.0 <  \lambda_4.
\end{equation}

Figure~\ref{fig:convergence-square-steklov} shows the convergence of the computed eigenvalue bounds with respect to high-precision  reference values.  Note that the results for $\lambda_2$ and $\lambda_3$ are almost the same.  We observe that, compared with the convergence rate $O(h)$ of the Crouzeix--Raviart method (see also the theoretical discussion in Remark \ref{remark:sub-optimal-rate-steklov}), the lower bounds from the \(P_1\) conforming FEM combined with Lehmann--Goerisch’s theorem recover the \(O(h^2)\) rate, matching the \(O(h^2)\) rate of the upper (Rayleigh--Ritz) bounds from the Lagrange FEM.

\begin{figure}[ht]
\begin{center}
\includegraphics[width=9cm]{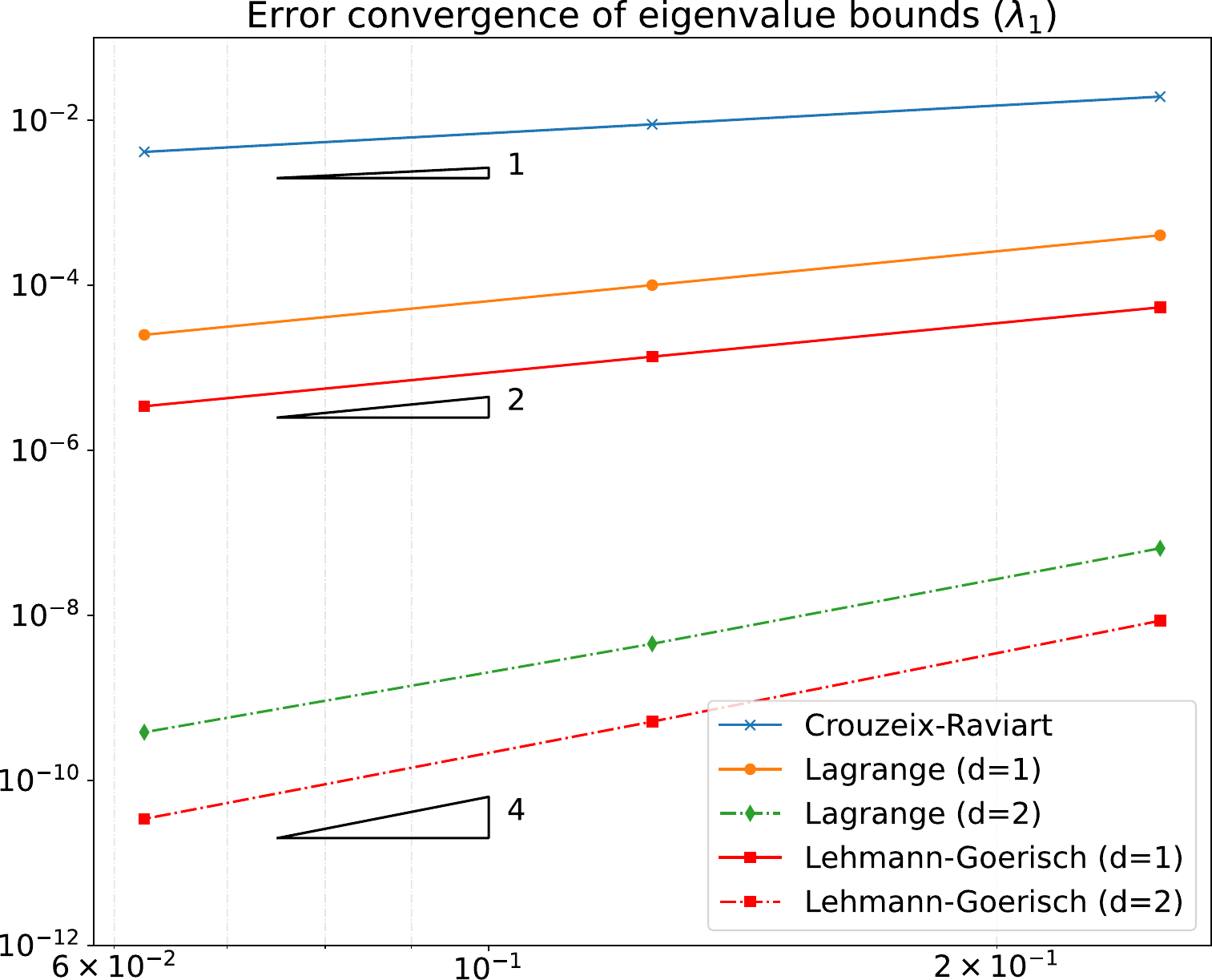} \\
\vskip 0.5cm
\includegraphics[width=9cm]{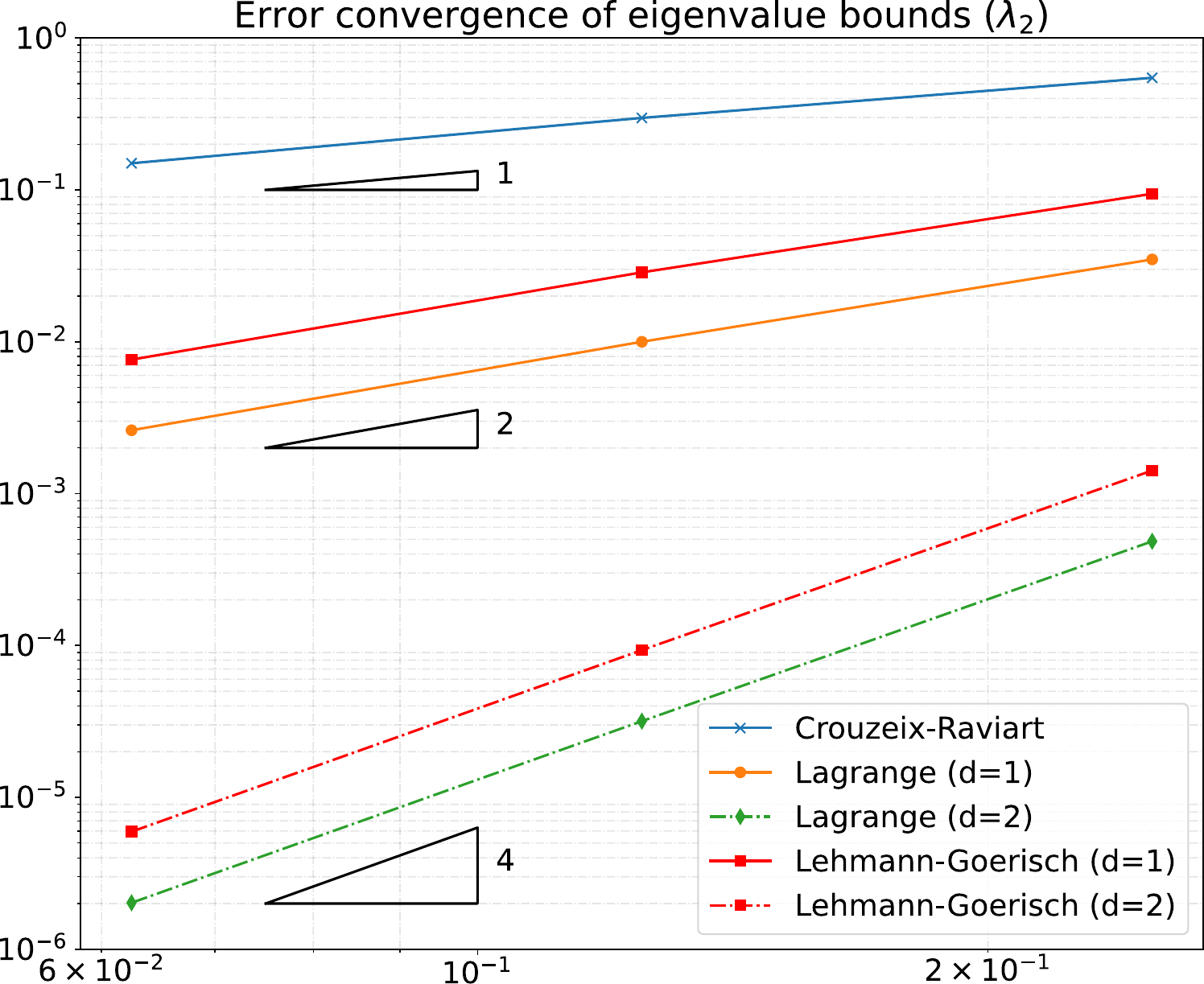}
\end{center}
\caption{Error convergence of Steklov eigenvalue bounds on unit square domain ($x$-axis: mesh size; $y$-axis: magnitude of the error). \label{fig:convergence-square-steklov}}
\end{figure}

\medskip

\paragraph{\bf L-shaped domain}

\begin{figure}[ht]
  \centering
  \includegraphics[width=8cm]{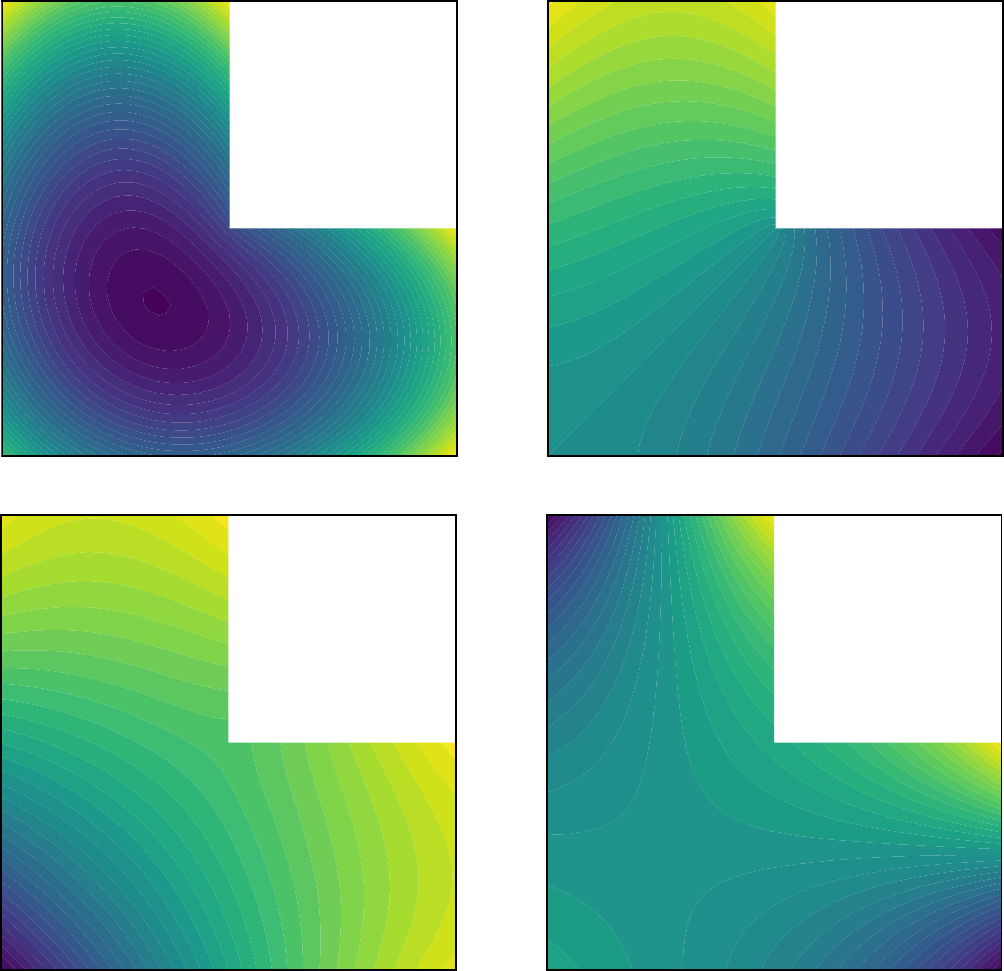}
  \caption{Approximate eigenfunctions for Steklov eigenvalue $\lambda_i$'s $(1\le i\le 4)$(L-shaped domain)}
  \label{fig:L_4_steklov_eig_function}
\end{figure}

Figure \ref{fig:L_4_steklov_eig_function} displays the computed eigenfunctions for the first four eigenvalues on the L-shaped domain.
The first, third, and fourth approximate eigenfunctions are symmetric with respect to reflection across the axis $x_1 = x_2$, whereas the second is antisymmetric.

We use a geometrically graded mesh with \(h_K=O(r^{1/3})\), where \(r\) is the distance from element \(K\) to the re-entrant corner (see \cref{fig:mesh_steklov}). 

For a graded mesh with \(h_{\max} = 0.031\), we employ the Crouzeix–Raviart FEM to obtain the following quantities:\[
C_h \le 0.14902,\quad
\lambda_{4,h} \in [1.69171,1.69172], \quad
\rho =1.6<1.63045 \le \lambda_4.
\]

Figure~\ref{fig:convergence-L-steklov} shows the convergence of the eigenvalue bounds with respect to high-precision reference values. 
Note that the lower eigenvalue bounds obtained by the Crouzeix–Raviart method exhibit only $O(h)$ convergence, which is due to the projection error term $C_h = O(\sqrt{h})$, as explained in Remark \ref{remark:sub-optimal-rate-steklov}.
The observed convergence rate of \( O(h^2) \) for the eigenvalue bounds of \( \lambda_2 \) with \( p = 2 \) indicates that the eigenfunction associated with \( \lambda_2 \) lacks the higher regularity possessed by those corresponding to \( \lambda_1 \) and \( \lambda_3 \). On the graded mesh, the $P_1$ conforming FEM achieves the expected $O(h^2)$ rate for both lower and upper bounds. In contrast, the $P_2$ conforming FEM exhibits a reduced convergence rate of $O(h^2)$ for the second eigenvalue, while retaining $O(h^4)$ convergence for the first and third eigenvalues.
Based on this observation, one might guess that 
the eigenfunctions corresponding to the first and third eigenvalues have $H^3$ regularity. 

\medskip

\begin{figure}[ht]
\begin{center}
\includegraphics[width=8.0cm]{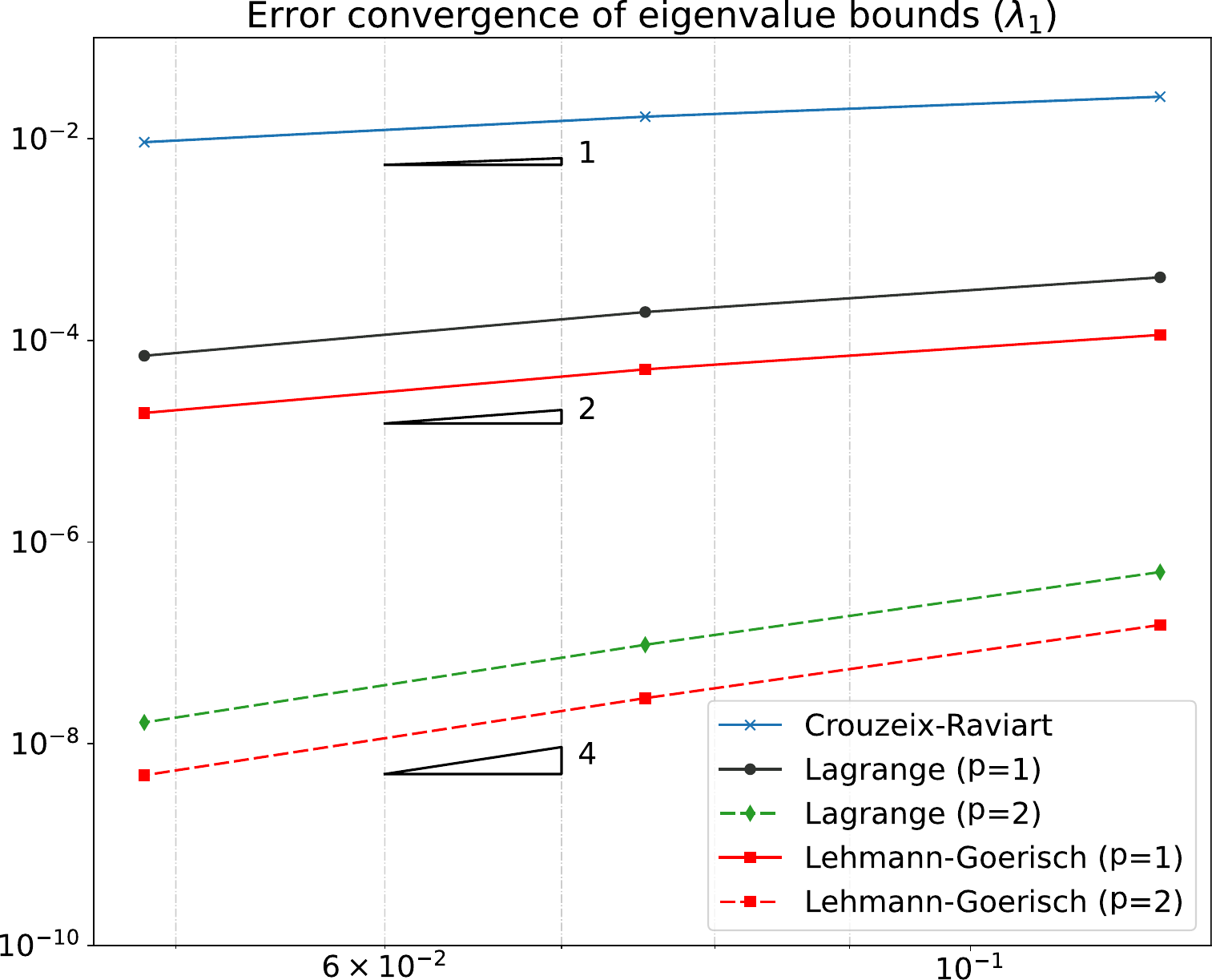} \\
\vskip 0.1cm
\includegraphics[width=8.0cm]{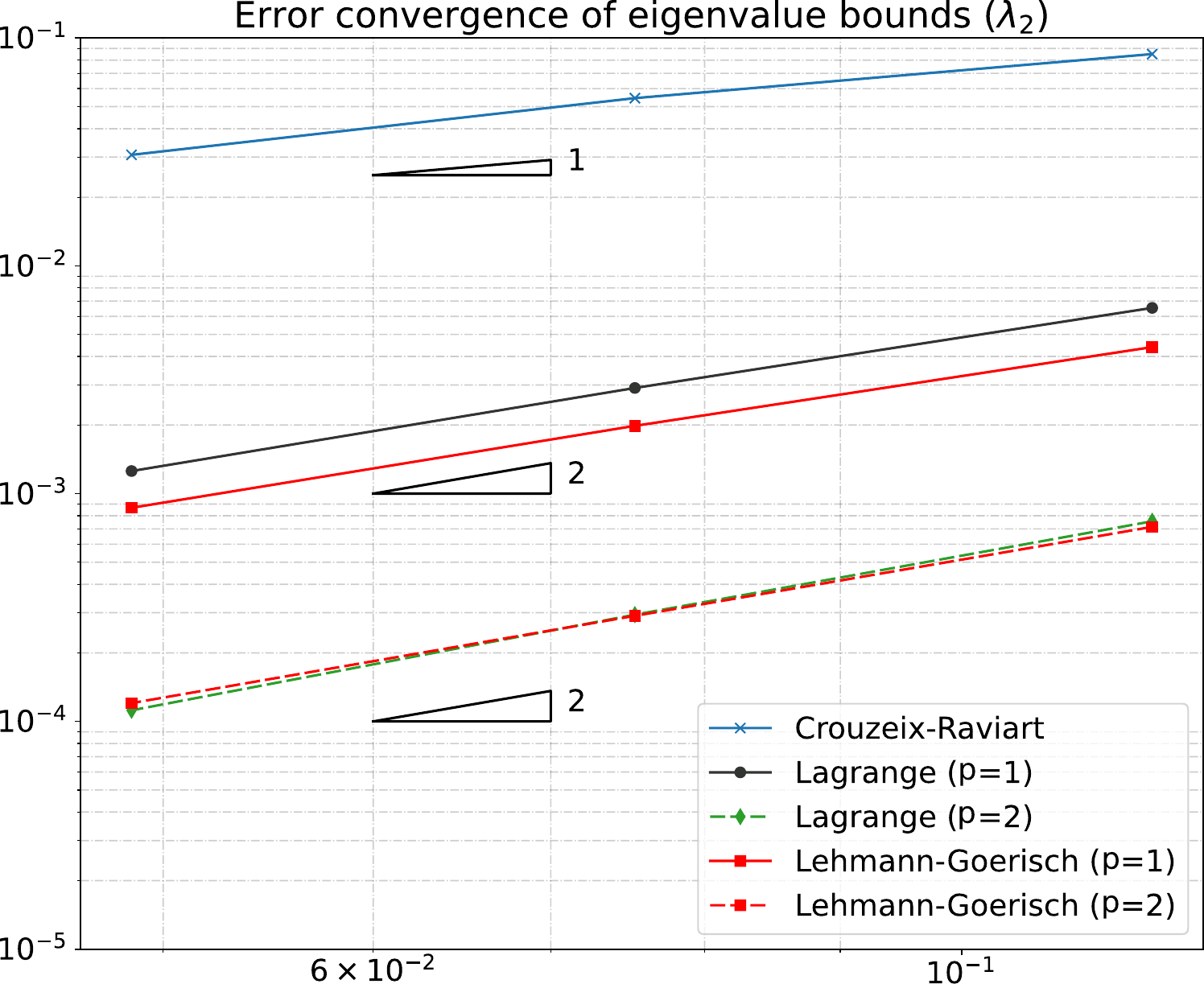}\\
\vskip 0.1cm
\includegraphics[width=8.0cm]{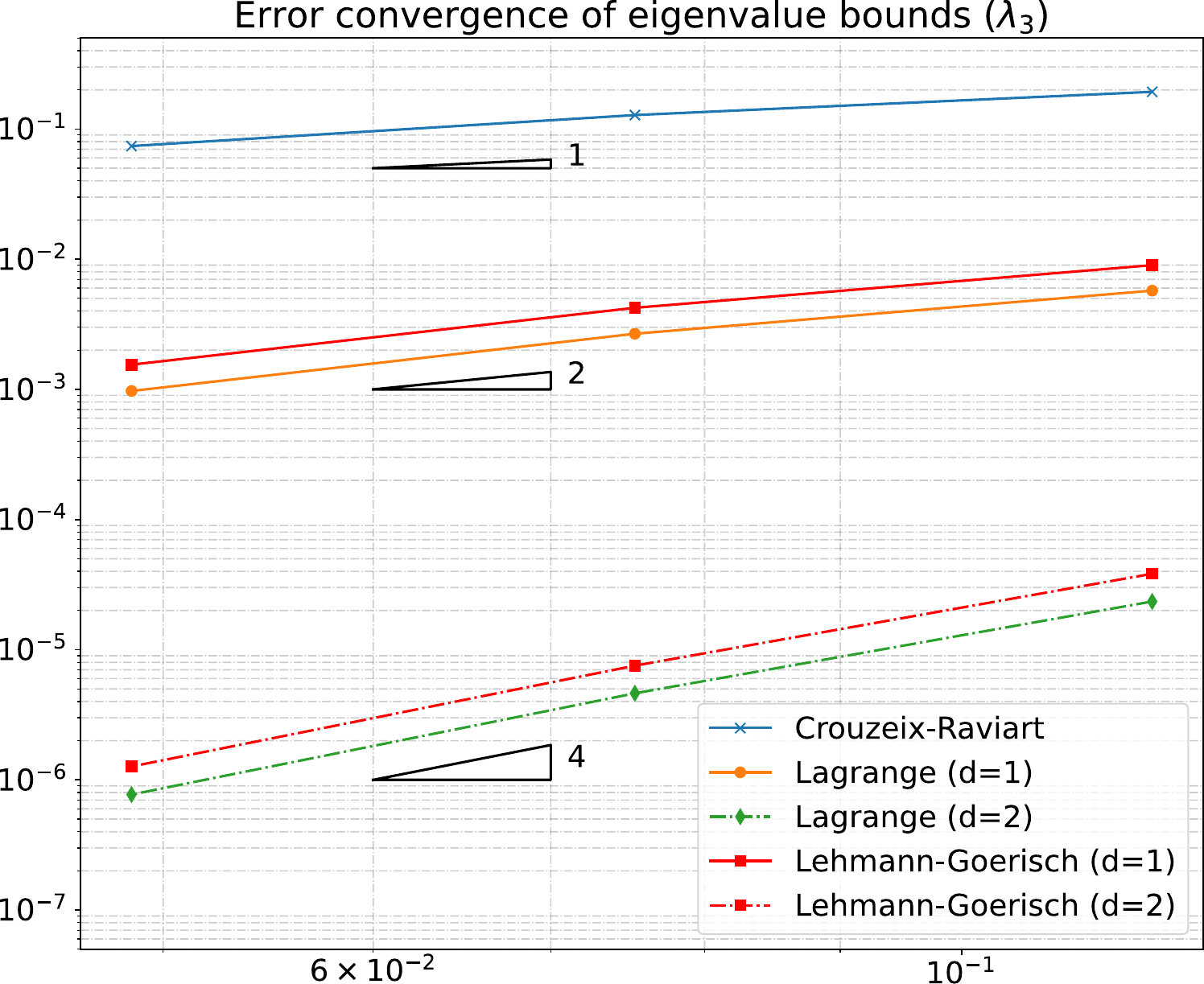}
\end{center}
\caption{Error convergence of Steklov eigenvalue bounds on the L-shaped domain ($x$-axis: mesh size $h_{\max}$; $y$-axis: magnitude of the error). \label{fig:convergence-L-steklov}}
\end{figure}

\medskip

\paragraph{Discussion on rigorous eigenvalue bounds}

To control floating-point and rounding errors, we employ interval arithmetic via INTLAB \cite{Rump-1999} together with Behnke’s method \cite{Behnke-1991}, as mentioned at the beginning of this section.  This provides fully verified eigenvalue bounds for the resulting generalized matrix eigenproblems.  The verified Steklov eigenvalue bounds based on the setting \eqref{eq:lg-setting-steklov} are listed in Tables \ref{table:square-stekolv-rigorous-bounds} and \ref{table:L-stekolv-rigorous-bounds}.

Note that the selection of an eigenvalue cluster when applying the Lehmann--Goerisch method affects the accuracy of the lower bounds.  
For example, in the case of Steklov eigenvalues, by choosing  
\[
\lambda_1 < \rho = 1.45 \le \lambda_2,
\]
on an \(8 \times 8\) triangulation of the square domain, and using the second-order Lagrange finite element space \( \text{CG}_h^2 \), we obtain the following rigorous bounds:
\[
\underline{0.2400790}83 \le \lambda_1 \le \underline{0.2400790}91; \quad 
\overline{\lambda}_1 - \underline{\lambda}_1 = 8.00\texttt{E}{-09}.
\]
This result is significantly sharper than the one presented in Table~\ref{table:square-stekolv-rigorous-bounds}, which was computed using a finer \( 32 \times 32 \) triangulation.

\begin{table}[h]
\caption{Rigorous bounds for the Steklov eigenvalues with $p=2$ and $N=32$ (square domain) \label{table:square-stekolv-rigorous-bounds}}
\begin{center}
	\begin{tabular}{|c|c|}
 \hline
\rule[-0.2cm]{0cm}{0.60cm}{}
 Rigorous bound & Bound gap: $\overline{\lambda}_i - \underline{\lambda}_i$\\
 \hline
\rule[-0.2cm]{0cm}{0.60cm}{}
$\underline{0.2}3993698\le \lambda_1 \le \underline{0.2}4007908546 $ 
& \texttt{1.42E-04}
\\
\hline
\rule[-0.2cm]{0cm}{0.60cm}{} 
$\underline{1.492}04771\le \lambda_2 \le \underline{1.492}303245657 $
& \texttt{2.56E-04}
\\
\hline
\rule[-0.2cm]{0cm}{0.60cm}{} 
$\underline{1.492}04789\le \lambda_3 \le \underline{1.492}303278880$ &
\texttt{2.56E-04}
\\
\hline
\end{tabular}
\end{center}
\end{table}

%


\begin{table}[h]
\caption{Rigorous bounds for the Steklov eigenvalues with $p=2$ and $h=0.118$ (L-shaped domain) \label{table:L-stekolv-rigorous-bounds}}
\begin{center}
\begin{tabular}{|c|c|}
 \hline
\rule[-0.2cm]{0cm}{0.60cm}{}
Rigorous bound & Bound gap:
 $\overline{\lambda}_i - \underline{\lambda}_i$\\
 \hline
\rule[-0.2cm]{0cm}{0.60cm}{}
$\underline{0.34141}4120 \le \lambda_1 \le  \underline{0.34141}605885$ 
& \texttt{1.93E-06}  \\
 \hline
\rule[-0.2cm]{0cm}{0.60cm}{} 
$\underline{0.616}711190 \le \lambda_2 \le \underline{0.616}98333496$ 
& \texttt{2.72E-04}  \\
 \hline
\rule[-0.2cm]{0cm}{0.60cm}{} 
$\underline{0.98427}4379  \le \lambda_3 \le \underline{0.98427}919894$ 
& \texttt{4.83E-06}  \\
  \hline
\end{tabular}
\end{center}
\end{table}

\section*{Summary}
In this paper, a two-stage algorithm is proposed to provide high-precision guaranteed eigenvalue bounds for differential operators. The application of this algorithm to the Dirichlet Laplacian eigenvalue problem and the Steklov eigenvalue problem demonstrates its adaptivity to graded meshes, high-order FEMs and provides high-precision eigenvalue bounds. It is worth pointing out that in addition to second-order differential operators, the proposed algorithm can also be applied to higher order differential operators, for example, the biharmonic operator.  Early approaches for biharmonic operators can be found in \cite{goerisch1985eigenwertschranken,wieners1997bounds} and more detailed discussions will be provided in our subsequent work.

\bibliography{library}

\end{document}